\numberwithin{equation}{section}
\newcommand{\Gal}{\operatorname{Gal}}
\newcommand{\Aut}{\operatorname{Aut}}
\newcommand{\p}{\mathfrak{p}}
\newcommand{\OK}{\mathcal{O}_{K}}
\newcommand{\Z}{\mathbf{Z}}
\newcommand{\F}{\mathbf{F}}
\newcommand{\Q}{\mathbf{Q}}
\newcommand{\PP}{\mathbf{P}}
\newcommand{\A}{\mathbf{A}}
\newcommand{\PSL}{\operatorname{PSL}}
\newcommand{\PGL}{\operatorname{PGL}}
\newcommand{\GL}{\operatorname{GL}}
\newcommand{\SL}{\operatorname{SL}}
\newcommand{\im}{\operatorname{im}}
\newcommand{\Tr}{\operatorname{Tr}}
\newcommand{\Norm}{\operatorname{N}}
\newtheorem{thm}{Theorem}[section]
\newtheorem{lem}[thm]{Lemma}
\newtheorem{defn}[thm]{Definition}
\newtheorem{cor}[thm]{Corollary}
\newtheorem{prop}[thm]{Proposition}
\theoremstyle{definition}
\newtheorem*{rmk}{Remark}
\newcommand{\leg}[2]{\left(\frac{#1}{#2}\right)}
\newcommand{\into}{\hookrightarrow}
\newcommand{\rhobar}{\overline{\rho}}
\newcommand{\Cns}{C_{ns}}
\newcommand{\Csp}{C_{sp}}
\newcommand{\Nns}{N_{ns}}
\newcommand{\Nsp}{N_{sp}}
\newcommand{\G}{\operatorname{G}}
\newcommand{\Kbar}{\overline{K}}
\begin{document}

\title{Local-Global principles for certain images of Galois representations}
\author{Anastassia Etropolski}

\begin{abstract}
Let $K$ be a number field and let $E/K$ be an elliptic curve whose mod $\ell$ Galois representation locally has image contained in a group $G$, up to conjugacy. We classify the possible images for the global Galois representation in the case where $G$ is a Cartan subgroup or the normalizer of a Cartan subgroup. When $K = \Q$, we deduce a counterexample to the local-global principle in the case where $G$ is the normalizer of a split Cartan and $\ell = 13$. In particular, there are at least three elliptic curves (up to twist) over $\Q$ whose mod $13$ image of Galois is locally contained in the normalizer of a split Cartan, but whose global image is not. 
\end{abstract}
\maketitle

\section{Introduction}
Let $E$ be an elliptic curve defined over a number field $K$. For a prime $\ell$, the points of order $\ell$ defined over $\Kbar$ form a rank two $\Z/\ell$-module, and it is natural to ask whether $E$ has a point of exact order $\ell$ defined over $K$. If this is the case, then the reduction of $E$ modulo a prime $\p$ coprime to $\ell$, denoted $\widetilde{E}_{\p}$,  will automatically have a point of order $\ell$. The converse to this is the following local-global problem: If $\widetilde{E}_{\p}$ has a point of order $\ell$ for almost all $\p$, does $E$ have a point of order $\ell$ defined over $K$? Katz studied this problem in \cite{katz1981} not only for elliptic curves but for higher dimension abelian varieties as well. In the case of elliptic curves, he showed that this is not true in general, but it is true that $E$ must be isogenous (over $K$) to an elliptic curve with a $K$-point of order $\ell$. 

One may rephrase this question in terms of the image of the mod $\ell$ Galois representation attached to $E$, denoted $\rhobar_{E,\ell}$. It turns out that $E$ having an $\ell$ torsion point over $K$ is equivalent to the image of the mod $\ell$ Galois representation landing in a certain subgroup of $\GL_2(\F_{\ell})$ (up to a choice of basis). The condition that $\widetilde{E}_{\p}$ have an $\ell$ torsion point is equivalent to the restriction of this representation to $\Gal(\overline{K_{\p}}/K_{\p})$ also landing in this type of subgroup. This allows one to rephrase the local-global problem entirely in the language of images of Galois representations. 

One natural subgroup of $\GL_2(\F_{\ell})$ is the group of upper triangular matrices. Similar to the story for torsion points, the image of the mod $\ell$ Galois representation lands in a group conjugate to the group of upper triangular matrices if and only if $E$ admits an isogeny of degree $\ell$, i.e.~ there exists an elliptic curve $E'/K$ and a degree $\ell$ isogeny $E \to E'$ defined over $K$. Sutherland studied the local-global problem for degree $\ell$ isogenies in \cite{sutherland2012} and showed that if $E/\Q$ admits a degree $\ell$ isogeny modulo $p$ for almost all $p$, then $E$ admits a degree $\ell$ isogeny over $\Q$, with exactly one exception: if $E$ has $j$-invariant $2268945/128$ and $\ell = 7$. This surprising counterexample comes from the fact that a certain modular curve has exactly one noncuspidal, non CM rational point. This type of argument is laid out in section \ref{sec:modcurves}. Sutherland also proved results in this direction for more general number fields, and his results have been generalized by others (see \cite{cremona-banwait2013}, \cite{anni2014}, \cite{vogt}).

This paper will generalize in a different direction by expanding the problem to other subgroups of $\GL_2(\F_{\ell})$, namely Cartan subgroups and their normalizers. A well known classification of the subgroups of $\GL_2(\F_{\ell})$ tells us that a subgroup of order prime to $\ell$ is either contained in a Cartan subgroup, the normalizer of a Cartan subgroup, or is one of the ``exceptional subgroups,'' which are small and well understood. In Section \ref{sec:local-global}, we determine when a local-global principle is allowed to hold via group theoretic considerations in the case of a general number field. In Section \ref{sec:modcurves} we take advantage of a calculation done by Banwait and Cremona in \cite{cremona-banwait2013} of some rational and quadratic points on the modular curve $X_{S_4}(13)$ to confirm some counterexamples to the local-global principle in the case where the image of the mod 13 Galois representation is locally contained in the normalizer of a split Cartan. This allows us to deduce a fairly complete theorem in the case of $K = \Q$. Full knowledge of $X_{S_4}(13)(\Q)$ is required to fully understand the failure of the local-global principle when $\ell = 13$.

\begin{thm}
Let $E/\Q$ be an elliptic curve and let $\ell$ be a prime. Let $G \subseteq \GL_2(\F_{\ell})$ be a fixed nonexceptional subgroup of order prime to $\ell$. If $\ell \ne 7, 13$ and the image of $\rhobar_{E,\ell}$ restricted to $\Gal(\overline{\Q_p}/\Q_p)$ is contained in $G$ up to conjugacy for almost all primes $p$, then $\im(\rhobar_{E,\ell})$ is contained in $G$ up to conjugacy. 

If $\ell = 7$, the only exception occurs when $G$ is a split Cartan, and it only occurs if $j(E) = 2268945/128$.

If $\ell = 13$, the only exception occurs when $G$ is the normalizer of a split Cartan, and there are at least 3 $j$-invariants classifying the isomorphism class containing $E$:
\begin{align*}
j(E) & = \frac{2^4 \cdot 5 \cdot 13^4 \cdot 17^3}{3^{13}},\\
j(E) & = -\frac{2^{12} \cdot 5^3 \cdot 11 \cdot 13^4}{3^{13}}, \text{ and} \\
j(E) & = \frac{2^{18} \cdot 3^3 \cdot 13^4 \cdot 127^3 \cdot 139^3 \cdot 157^3 \cdot 283^3 \cdot 929}{5^{13} \cdot 61^{31}}.
\end{align*}
\end{thm}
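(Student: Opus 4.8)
The plan is to reduce the local-global problem to a question about rational points on certain modular curves, and then invoke the classification of allowable global images from Section~\ref{sec:local-global} together with known point computations. First, I would set up the dictionary: if $\rhobar_{E,\ell}$ restricted to $\Gal(\overline{\Q_p}/\Q_p)$ lies in $G$ up to conjugacy for almost all $p$, then the Chebotarev density theorem forces every element of $\im(\rhobar_{E,\ell})$ to be conjugate into $G$ (since Frobenius elements are equidistributed and the conjugates of $G$ cover all the elements that can arise). Thus the group-theoretic input is: which subgroups $H \subseteq \GL_2(\F_\ell)$ have the property that every element of $H$ is $\GL_2(\F_\ell)$-conjugate into $G$, yet $H$ itself is not conjugate into $G$? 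Section~\ref{sec:local-global} classifies exactly these ``locally-$G$ but not globally-$G$'' possibilities for $G$ a Cartan subgroup or its normalizer, and the point is that outside of a short explicit list of $(\ell, G)$ pairs there are no such $H$, which immediately gives the local-global principle for $\ell \neq 7, 13$.

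Next, for the exceptional pairs, I would observe that each offending $H$ corresponds to a modular curve $X_H/\Q$ parametrizing elliptic curves with mod $\ell$ image contained in $H$, and a counterexample to the local-global principle exists precisely when $X_H(\Q)$ contains a noncuspidal, non-CM point. For $\ell = 7$ with $G$ a split Cartan, the relevant curve is (a quotient related to) $X_{sp}(7)$, and Sutherland's computation in \cite{sutherland2012} shows its only such rational point has $j = 2268945/128$; I would simply cite this. For $\ell = 13$ with $G$ the normalizer of a split Cartan, the classification from Section~\ref{sec:local-global} will show the offending groups are related to the exceptional subgroup $S_4 \subseteq \PGL_2(\F_{13})$, so the relevant modular curve is $X_{S_4}(13)$; here I would invoke the determination of $X_{S_4}(13)(\Q)$ (and the quadratic points) by Banwait and Cremona in \cite{cremona-banwait2013}. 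Their list of rational points, after discarding cusps and CM points, yields exactly the three $j$-invariants in the statement. The final bookkeeping step is to confirm that for each such $j$, the full group $\im(\rhobar_{E,13})$ really is one of the groups $H$ on the bad list and genuinely fails to be conjugate into the normalizer of a split Cartan — this is a finite check using that $S_4$ is not contained in the image of any Cartan normalizer in $\PGL_2(\F_{13})$.

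I expect the main obstacle to be the group-theoretic classification in Section~\ref{sec:local-global}: one must carefully enumerate, for $G$ a (split or nonsplit) Cartan or a Cartan normalizer in $\GL_2(\F_\ell)$, all subgroups $H$ whose elements are individually conjugate into $G$. The subtlety is that conjugacy of elements is controlled by characteristic polynomials (eigenvalue structure), so $H$ ``looks local-$G$'' iff every element of $H$ has the eigenvalue pattern forced by $G$ (e.g. split eigenvalues in $\F_\ell$, or a pair of conjugate eigenvalues in $\F_{\ell^2}$, possibly with the determinant/trace constraints coming from the normalizer structure); but a group all of whose elements have such a pattern need not be diagonalizable or monomial as a whole. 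The exceptional subgroups $A_4, S_4, A_5$ in $\PGL_2$ are exactly where this can happen, and pinning down for which $\ell$ such an exceptional group (or its preimage in $\GL_2(\F_\ell)$) has all elements conjugate into $G$ is the delicate congruence-condition analysis. Once that list is in hand, everything else is citation of existing modular-curve computations plus routine verification.
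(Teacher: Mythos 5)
Your overall route — reduce to group theory via Chebotarev, classify the locally-$G$-but-not-globally-$G$ subgroups as in Section~\ref{sec:local-global}, then invoke modular-curve computations for the finitely many residual $(\ell,G)$ pairs — matches the paper's strategy, and your treatment of $\ell = 7$ and $\ell = 13$ is substantially correct. One small imprecision: for $\ell = 7$ the paper does not examine rational points on a split-Cartan modular curve directly; it deduces the result from Sutherland's theorem on $7$-isogenies (the Borel case), and then observes that the exceptional curve with $j = 2268945/128$ in fact admits \emph{two} $7$-isogenies modulo every prime of good reduction, which upgrades the local Borel condition to the local split-Cartan condition.

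There is, however, a genuine gap: your claim that the group-theoretic classification ``immediately gives the local-global principle for $\ell \ne 7, 13$'' is not correct for $\ell = 11$. Theorem~\ref{thm:Nns} leaves open the possibility that $E/\Q$ satisfies the local condition for the normalizer of a nonsplit Cartan with $H \simeq S_4$ and $\ell \equiv 11 \pmod{24}$, and the inertia bound of Proposition~\ref{prop:bounds} (with $d = 1$) gives $\ell \le 13$, so $\ell = 11$ survives as a residual candidate over $\Q$. That case cannot be dispatched by group theory alone; it requires Ligozat's result that $X_{S_4}(11)$ is the elliptic curve 121.a2 with trivial Mordell--Weil group over $\Q$, its single rational point being a cusp (see Corollary~\ref{cor:Nns over Q}). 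Without this additional modular-curve input your proof leaves the pair $(\ell, G) = (11,\ \text{normalizer of a nonsplit Cartan})$ unresolved, so the asserted conclusion for all $\ell \ne 7, 13$ does not yet follow.
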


\section{Preliminaries}

\subsection{Galois representations and the Chebotarev density theorem}\label{sec:cdt}
Fix a prime number $\ell$, a number field $K$, and an algebraic closure $\Kbar$ of $K$. Then $\G_K := \Gal(\overline{K}/K)$ acts on the $\ell$-torsion points of $E(\Kbar)$, denoted $E[\ell]$, giving rise to the mod $\ell$ Galois representation
$$\rhobar_{E,\ell} \colon \G_{K} \to \Aut(E[\ell]) \simeq \GL_2(\F_{\ell}).$$

The Weil pairing on $E$ tells us that the composition of $\rhobar_{E,\ell}$ with the determinant map $\GL_2(\F_{\ell}) \to \F_{\ell}^{\times}$ is exactly the mod $\ell$ cyclotomic character. Therefore, if $E$ is defined over $K$ such that $K \cap \Q(\mu_{\ell}) = \Q$, the determinant map is necessarily surjective for all $\ell$. Furthermore, the image of the determinant map is contained in $\left(\F_{\ell}^{\times}\right)^2$ if and only if $K$ contains the unique quadratic subextension of $\Q(\mu_{\ell})/\Q$, namely $\Q(\sqrt{\ell^*}) := \Q\left(\sqrt{\leg{-1}{\ell} \ell}\right)$.

If $S$ is a finite set of primes of $K$ containing the primes of bad reduction and the primes above $\ell$, then $\rhobar_{E,\ell}$ is unramified outside of $S$. Therefore for $\p \notin S$, the restriction of $\rhobar_{E,\ell}$ to $\G_{K_{\p}}$ factors through $\widehat{\Z}$. Let $\varphi_{\p}$ denote a lift of the Frobenius automorphism of the residue field of $K_{\p}$. Then $\widehat{\Z}$ is topologically generated by $\varphi_{\p}$, and we denote the image of $\varphi_{\p}$ in $\GL_2(\F_{\ell})$ as a conjugacy class $\varphi_{\p,\ell}$. 

Letting $G \subseteq \GL_2(\F_{\ell})$, the Chebotarev density theorem implies that the set of $\p$ for which $\varphi_{\p,\ell}$ is contained in $G$ has positive density.  This allows us to set up the local-global problem as a purely group theoretic one. First we have the following definition.

\begin{defn}
We say that $E$ \emph{satisfies the local condition for $H$} if the image of the restriction of $\rhobar_{E,\ell}$ to $\G_{K_{\p}}$ is contained in a subgroup conjugate to $H$ for a set of primes $\p$ of density one. 
\end{defn}

Let $G = \rhobar_{E,\ell}(\G_K)$, and assume that $E$ satisfies the local condition for $H \subseteq \GL_2(\F_{\ell})$. Then, by the Chebotarev Density Theorem, for every $g \in G$, the conjugacy class of $g$ is equal to $\varphi_{\p,\ell}$ for a set of primes of positive density. Thus we may choose $\p$ such that $E$ satisfies the local condition for $H$ and the conjugacy class of $g$ coincides with $\varphi_{\p,\ell}$, i.e.~ $g$ is contained in a subgroup conjugate to $H$. The global condition is that $G$ be contained in a subgroup conjugate to $H$, so we can rephrase the problem as follows:
\begin{center}``If every $g \in G$ is contained in a group conjugate to $H$, is $G$ conjugate to a subgroup of $H$?''\end{center}

If the answer is ``Yes'', then we say that \emph{$E$ satisfies the local-global principle for $H$}. 

\begin{rmk}
If $\ell = 2$, then $E$ necessarily satisfies the local-global principle for every $H$ we will consider. This is because inside $\GL_2(\F_{\ell})$ the conjugacy class of both the split and nonsplit Cartan subgroup contains only a single element. For simplicity, we will assume in the proofs that $\ell > 2$.
\end{rmk}

\subsection{Subgroups of $\GL_2(\F_{\ell})$}\label{sec:subgroups}
In this section we will define some classical subgroups of $\GL_2(\F_{\ell})$.

A Borel subgroup is any subgroup conjugate to the subgroup of upper triangular matrices in $\GL_2(\F_{\ell})$, and therefore has order $\ell (\ell - 1)^2$. A Cartan subgroup comes in two varieties: split and nonsplit. A split Cartan is a group conjugate to the group of diagonal matrices, which we denote by $\Csp$, and is isomorphic to $(\F_{\ell}^{\times})^2$. A nonsplit Cartan is a group conjugate to
$$\Cns := \left\{  \begin{pmatrix} a & \delta b \\ b & a \end{pmatrix} \right\}\subseteq \GL_2(\F_{\ell})$$
for some $\delta$ with $\leg{\delta}{\ell} = -1$ and is isomorphic to $\F_{\ell^2}^{\times}$.\smallskip

Any Cartan subgroup has index 2 in its normalizer, and we have the following explicit constructions of their normalizers. Define the following subgroups of $\GL_2(\F_{\ell})$ by
$$
N_{sp} := \left\{ \begin{pmatrix} a & 0 \\ 0 & b \end{pmatrix},  \begin{pmatrix} 0 & c \\ d & 0 \end{pmatrix} \right\} 
$$
$$
N_{ns} := \left\{  \begin{pmatrix} a & \delta b \\ b & a \end{pmatrix},  \begin{pmatrix} a & -\delta b \\ b & -a\end{pmatrix} \right\},\; \text{where $\delta$ is any fixed quadratic nonresidue mod $\ell$}.
$$
Then the normalizer of a split Cartan will be conjugate to $\Nsp$ and the normalizer of a nonsplit Cartan will be conjugate to $\Nns$.

Alternatively, $\Nsp$ can be defined by adjoining $\left( \begin{smallmatrix}  0 & 1 \\ 1 & 0 \end{smallmatrix}\right)$ to $\Csp$, and $\Nns$ is the group obtained by adjoining $\left( \begin{smallmatrix}  0 & 1 \\ -1 & 0 \end{smallmatrix}\right)$ to $\Cns$. It is worth noting that a Borel subgroup is maximal, as is the normalizer of any Cartan subgroup.

These constructions will be useful for simplifying computations when we are allowed to fix a basis. More generally, we can define these subgroups by considering the action of $\GL_2(\F_{\ell})$ on $\PP^1(\F_{\ell})$ and $\PP^1(\F_{\ell^2})$, which we will define on the left as follows:
$$\begin{pmatrix} a & b \\ c & d \end{pmatrix}[x : y] := [ax + by : cx + dy].$$
If we restrict this action to the quotient $\PGL_2(\F_{\ell})$, then the action is faithful.

Let $g \in \GL_2(\F_{\ell})$. Then $g$ belongs to a Borel subgroup if it fixes a line in $\PP^1(\F_{\ell})$, it belongs to a split Cartan subgroup (resp. its normalizer) if it fixes (resp. fixes or swaps) two lines in $\PP^1(\F_{\ell})$, and it belongs to a nonsplit Cartan (resp. its normalizer) if it fixes (resp. fixes or swaps) two conjugate lines in $\PP^1(\F_{\ell^2}) \setminus \PP^1(\F_{\ell})$ for any fixed quadratic extension $\F_{\ell^2}/\F_{\ell}$.

We will restrict our attention to the Cartan subgroups and their normalizers. By definition, $g$ belongs to a split Cartan subgroup if and only if it is diagonalizable over $\F_{\ell}$, and two elements belong to the same split Cartan if and only if they are diagonalizable with respect to the same basis. 

To understand the nonsplit Cartan we need to fix a quadratic extension $\F_{\ell^2}/\F_{\ell}$. Up to scaling, we may fix a basis of the form $\{1, \alpha\}$ for this extension. Then $g$ is in the nonsplit Cartan corresponding to this basis if it fixes the line $[1 \colon \alpha]$ (it will necessarily also fix its conjugate since $g$ is defined over $\F_{\ell}$). If we write $g = \left( \begin{smallmatrix} a & b \\ c & d \end{smallmatrix}\right)$, this occurs if and only if $b \neq 0 $, $\Tr(\alpha) = (d-a)/b,$ and $\Norm(\alpha) = -c/b$. Now we may determine whether two elements belong to the same nonsplit Cartan by determining whether there exits an $\alpha \in \F_{\ell^2} \setminus \F_{\ell}$ that satisfies the corresponding norm and trace conditions for each element.

For $g$ to be in the normalizer of the nonsplit Cartan, but not necessarily in the Cartan itself, we need $g$ to swap $[ 1 \colon \alpha]$ and its conjugate. This occurs if and only if $a \Tr(\alpha) + b \Norm(\alpha) - c = 0$ (note that $g$ must have trace 0 for this to occur, so $d = -a$).

Using these descriptions, the following observations can be easily deduced. We combine them into one proposition for convenience.

\begin{prop}\label{prop:cartan facts}
Let $G \subseteq \GL_2(\F_{\ell})$. Denote by $H$ the image of $G$ in $\PGL_2(\F_{\ell})$, and for any $g \in \GL_2(\F_{\ell})$, $h$ will denote a representative for its image in $\PGL_2(\F_{\ell})$. Then the following are true:
\begin{enumerate}
\item If $g$ is diagonalizable, then $g$ is in a split Cartan.
\item If $g$ has irreducible characteristic polynomial, then $g$ is in a nonsplit Cartan.
\item Let $C$ be any Cartan subgroup and let $N$ denote its normalizer. Then for any $g \in N \setminus C$, $h$ has order two.
\item If $g$ is in the normalizer of a Cartan subgroup, then $g$ is diagonalizable over $\F_{\ell}$ if and only if its characteristic polynomial is reducible. 
\item If $g$ has trace 0, then $g$ is in the normalizer of a nonsplit Cartan. 
\end{enumerate}
\end{prop}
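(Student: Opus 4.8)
The plan is to verify each of the five items directly from the descriptions of Cartan subgroups and their normalizers given just above the statement, handling them more or less independently. For (1), if $g$ is diagonalizable over $\F_\ell$, then it is diagonal with respect to some basis, and the group of all matrices diagonal with respect to that basis is, by definition, a split Cartan subgroup containing $g$. For (2), if the characteristic polynomial of $g$ is irreducible over $\F_\ell$, then it splits over $\F_{\ell^2}$ with two conjugate roots $\lambda, \lambda^\ell \in \F_{\ell^2}\setminus\F_\ell$; the corresponding eigenlines $[1:\alpha]$ and its conjugate in $\PP^1(\F_{\ell^2})$ are fixed by $g$, so by the characterization of membership in a nonsplit Cartan (the norm and trace conditions on $\alpha$), $g$ lies in the nonsplit Cartan attached to $\F_\ell(\alpha)$. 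One should note the degenerate case where $g$ is a scalar, which lies in every Cartan; but a scalar has reducible characteristic polynomial, so this does not arise in (1) or (2) as a problem, though it is worth remarking that the converses of (1) and (2) fail precisely for scalars.

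For (3), let $C$ be a Cartan subgroup and $N$ its normalizer, so $C$ has index $2$ in $N$. Any $g \in N\setminus C$ acts on the set of two lines (in $\PP^1(\F_\ell)$ in the split case, or the conjugate pair in $\PP^1(\F_{\ell^2})$ in the nonsplit case) by swapping them — it cannot fix both, else it would be in $C$. Hence $g^2$ fixes both lines, so $g^2 \in C$; moreover, conjugation by $g$ on $C$ is the nontrivial order-two automorphism (inversion in the nonsplit case, coordinate-swap in the split case), and one checks $g^2$ is central in $N$, in fact a scalar: concretely, using the explicit models $\Nsp, \Nns$, an off-diagonal/anti-type element squares to a scalar matrix. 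Therefore $h$, the image of $g$ in $\PGL_2(\F_\ell)$, satisfies $h^2 = 1$, and $h \ne 1$ since $g \notin C \supseteq \F_\ell^\times I$, giving $h$ order exactly two.

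For (4), suppose $g$ lies in the normalizer $N$ of a Cartan subgroup $C$. If $g \in C$: in the split case $g$ is diagonalizable with reducible characteristic polynomial, and in the nonsplit case $g$ acts on conjugate lines outside $\PP^1(\F_\ell)$ with no eigenline over $\F_\ell$, so its characteristic polynomial is irreducible — consistent with the claim. If $g \in N\setminus C$, then by (3) $g$ swaps the two lines, so $g$ has no eigenline over the relevant field, and (using the trace-$0$ observation recorded above, $d = -a$) its characteristic polynomial is $x^2 - (a^2 + bc)$ up to the model; this is reducible exactly when $g$ is diagonalizable over $\F_\ell$, i.e.\ exactly when the discriminant is a square — so again the biconditional holds. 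The cleanest route is: an element of $\GL_2(\F_\ell)$ is diagonalizable over $\F_\ell$ iff its characteristic polynomial is a product of linear factors \emph{and} $g$ is not a nontrivial Jordan block; but inside a Cartan normalizer there are no nontrivial unipotent-times-scalar elements (the order is prime to $\ell$), so the Jordan-block obstruction vanishes and reducibility is equivalent to diagonalizability. Finally, for (5), if $\tr(g) = 0$, write $g = \left(\begin{smallmatrix} a & b \\ c & -a\end{smallmatrix}\right)$; I want to exhibit $\alpha \in \F_{\ell^2}\setminus \F_\ell$ with $a\,\Tr(\alpha) + b\,\Norm(\alpha) - c = 0$, which is the condition derived above for $g$ to swap $[1:\alpha]$ with its conjugate, hence lie in the normalizer of the corresponding nonsplit Cartan. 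This is one linear equation in the two "coordinates" $\Tr(\alpha), \Norm(\alpha)$ of $\alpha$; the only thing to check is that it has a solution with $\alpha \notin \F_\ell$, i.e.\ with $\Tr(\alpha)^2 - 4\Norm(\alpha)$ a nonresidue, which is a short counting argument (or one can split into cases according to whether $a = 0$).

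The routine bookkeeping aside, the only genuinely delicate point is (4): one must be careful that "reducible characteristic polynomial" alone does not imply diagonalizable in $\GL_2(\F_\ell)$ (Jordan blocks!), so the argument must use that elements of a Cartan normalizer have order prime to $\ell$ — which is exactly the running hypothesis on $G$ throughout the paper — to rule out the unipotent obstruction. I expect that to be the main thing to get right; everything else is a direct unwinding of the line-fixing/line-swapping descriptions.
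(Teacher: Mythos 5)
Your proof is correct and follows exactly the route the paper has in mind: the paper offers no explicit argument for this proposition, only the remark that the observations ``can be easily deduced'' from the line-fixing and line-swapping descriptions given just before the statement, and your proposal unwinds precisely those descriptions. The one genuinely nontrivial point is the one you flag in (4) --- that reducibility of the characteristic polynomial alone does not give diagonalizability, and one must use that a Cartan normalizer has order prime to $\ell$ to rule out nontrivial Jordan blocks --- and you handle it correctly.
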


Now we can state the following classification of subgroups of $\GL_2(\F_{\ell})$.

\begin{prop}[{\cite[Lemma 2]{swinn1972}}]
\label{prop:maxsubgroups}
Let $G$ be a subgroup of $\GL_2(\F_{\ell})$. If $\ell \mid |G|$, then either $G$ is contained in a Borel or $G$ contains $\SL_2(\F_{\ell})$. If $\ell \nmid |G|$, let $H$ be the image of $G$ in $\PGL_2(\F_{\ell})$. Then
\begin{enumerate}

\item $H$ is cyclic and $G$ is contained in a Cartan subgroup, or
\item $H$ is dihedral and $G$ is contained in the normalizer of a Cartan subgroup but not in the Cartan subgroup itself, or
\item $H$ is isomorphic to $A_4, S_4, or A_5$.
\end{enumerate}
\end{prop}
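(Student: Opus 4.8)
The plan is to treat the two cases of the proposition separately, according to whether $\ell$ divides $|G|$. Suppose first $\ell \mid |G|$. Since $|\GL_2(\F_\ell)| = \ell(\ell-1)^2(\ell+1)$, a Sylow $\ell$-subgroup $P$ of $G$ has order $\ell$, hence is unipotent and fixes a unique line of $\PP^1(\F_\ell)$. If $G$ has a unique Sylow $\ell$-subgroup, then $P \triangleleft G$ and $G \subseteq N_{\GL_2(\F_\ell)}(P)$; as $N_{\GL_2(\F_\ell)}(P)$ is exactly the Borel subgroup stabilizing the line fixed by $P$, we conclude that $G$ lies in a Borel. Otherwise $G$ contains two distinct Sylow $\ell$-subgroups $P_1, P_2$; these fix distinct lines of $\PP^1(\F_\ell)$, so by $2$-transitivity of the $\GL_2(\F_\ell)$-action we may conjugate to make $P_1, P_2$ the upper and lower unipotent subgroups, which generate $\SL_2(\F_\ell)$. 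Since $\SL_2(\F_\ell) \triangleleft \GL_2(\F_\ell)$, this gives $\SL_2(\F_\ell) = \langle P_1, P_2\rangle \subseteq G$.

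Now suppose $\ell \nmid |G|$, so $|H|$ is also prime to $\ell$. First I would prove the abstract classification of $H$. Every nontrivial $h \in H$ lifts to a non-scalar semisimple element of $\GL_2(\overline{\F_\ell})$ and hence fixes exactly two points of $\PP^1(\overline{\F_\ell})$; let $\Omega \subseteq \PP^1(\overline{\F_\ell})$ be the union of these fixed-point sets, on which $H$ acts. Counting $\{(h, P) : h \ne 1,\ hP = P\}$ in two ways — as $2(|H| - 1)$, and by summing $|\mathcal O|(e_{\mathcal O} - 1)$ over the $H$-orbits $\mathcal O \subseteq \Omega$ with point-stabilizer order $e_{\mathcal O} \ge 2$ — yields the classical ``Klein equation'' $2 - 2/|H| = \sum_{\mathcal O}(1 - 1/e_{\mathcal O})$. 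Since each summand is at least $1/2$, there are at most three orbits: with at most two orbits $H$ fixes two points of $\PP^1$ and is therefore cyclic, while three orbits force the stabilizer pattern to be $(2,2,m)$, $(2,3,3)$, $(2,3,4)$, or $(2,3,5)$, making $H$ dihedral, $A_4$, $S_4$, or $A_5$ (the last three being identified via the faithful action of $H$ on a suitable orbit). Alternatively this is precisely Dickson's classification and may simply be quoted.

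It remains to translate the structure of $H$ into the stated position of $G$. Writing $Z$ for the scalar subgroup, we have $G/(G \cap Z) \cong H$. If $H = \langle \bar g\rangle$ is cyclic, fix a lift $g \in G$ of a generator; then every element of $G$ has the form $z g^k$ with $z \in Z$, and hence fixes the two eigenlines of $g$. If $g$ is scalar then $H$ is trivial and $G \subseteq Z$, which lies in every Cartan; otherwise $g$ is non-scalar semisimple, its eigenlines are defined over $\F_\ell$ or form a conjugate pair over $\F_{\ell^2}$, and the subgroup of $\GL_2(\F_\ell)$ fixing both of them is precisely a split, respectively nonsplit, Cartan $C$, so $G \subseteq C$. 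If $H$ is dihedral, let $C_H$ be its cyclic subgroup of index $2$ and $G_0 \triangleleft G$ its preimage, of index $2$; choose $a_0 \in G_0$ mapping to a generator of $C_H$, which is non-scalar, so by the cyclic case there is a unique Cartan $C$ — the one determined by the eigenlines of $a_0$ — with $G_0 \subseteq C$. For $g \in G$ the element $g a_0 g^{-1}$ lies in $G_0 \subseteq C$, is non-scalar, and determines (with $g$) the conjugate Cartan $gCg^{-1}$; by uniqueness $gCg^{-1} = C$, so $G \subseteq N(C)$. Finally $G$ is contained in no Cartan at all, because the image in $\PGL_2(\F_\ell)$ of any Cartan is cyclic whereas $H$ is not.

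The main obstacle is the classification step: setting up the Klein equation cleanly over $\overline{\F_\ell}$ and, above all, distinguishing the three exceptional stabilizer patterns $(2,3,3)$, $(2,3,4)$, $(2,3,5)$ as $A_4$, $S_4$, $A_5$, which requires recovering each abstract group from its action on one of its orbits. Everything downstream — the Sylow dichotomy and the passage between subgroup structure and Cartans — is routine, and one may in any case invoke Dickson's theorem directly, since the proposition is quoted from \cite{swinn1972}.
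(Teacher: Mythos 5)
The paper does not prove this proposition: it is stated with a citation to \cite{swinn1972}, so there is no in-text argument to compare against. Your blind proof is a correct, essentially complete reconstruction of the standard argument. The Sylow-$\ell$ dichotomy is right: the $\ell$-part of $|\GL_2(\F_\ell)|$ is exactly $\ell$; the $\GL_2$-normalizer of a Sylow $\ell$-subgroup is the Borel stabilizing its unique fixed line; and two distinct Sylow $\ell$-subgroups fix distinct lines (otherwise they would lie in the same Borel, which has a unique Sylow $\ell$-subgroup, namely its unipotent radical) and hence, after conjugating to the two coordinate axes, generate $\SL_2(\F_\ell)$. In the $\ell \nmid |G|$ case, the Klein count on the fixed-point locus in $\PP^1(\overline{\F_\ell})$ correctly restricts the stabilizer pattern, and the translation from the structure of $H$ to the position of $G$ is clean: the centralizer of a non-scalar semisimple element is the unique Cartan containing it, so $G_0 \subseteq C$ forces $gCg^{-1} = C$ for all $g \in G$, while $H$ non-cyclic rules out $G$ lying in any Cartan.

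The step you flag as the obstacle is indeed the only real gap: passing from the three-orbit patterns $(2,2,m)$, $(2,3,3)$, $(2,3,4)$, $(2,3,5)$ to dihedral, $A_4$, $S_4$, $A_5$. Even the dihedral case needs a small observation you did not make explicit: the element $s$ swapping the two points of the size-$2$ orbit is represented by an antidiagonal matrix whose square is scalar, so $s$ has order exactly $2$ in $\PGL_2$, and conjugation by $s$ inverts the cyclic part; this is what gives the dihedral presentation rather than some other cyclic-by-$\Z/2$ extension. The exceptional identifications require the faithful action on a small orbit and are more involved. Since the proposition is a quoted classical fact, citing Dickson's classification at that point, as you suggest, is entirely appropriate and is effectively what the paper itself does.
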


\begin{rmk} The copies of $A_4$ and $A_5$ which appear in $\PGL_2(\F_{\ell})$ will actually be contained in $\PSL_2(\F_{\ell})$. Therefore, any lift to $\GL_2(\F_{\ell})$ of these groups will have determinant contained in $\left(\F_{\ell}^{\times}\right)^2$. If $\ell \equiv \pm 1\pmod{8}$, then the copy of $S_4$ will also be in $\PSL_2(\F_{\ell})$. If $\ell \equiv \pm 3\pmod{8}$, however, then $\PSL_2(\F_{\ell})$ does not contain a subgroup isomorphic to $S_4$, and so the determinant map will be nontrivial. In particular, there is a lift of $S_4$ in $\GL_2(\F_{\ell})$ with surjective determinant. \end{rmk}

Much of this paper will deal with the final category of subgroups in this list, which are called the exceptional subgroups. It is important to note that, for $\ell > 5$, subgroups $H$ which are isomorphic to one of these symmetric or alternating groups are all conjugate in $\PGL_2(\F_{\ell})$ (see \cite[Thm. 4.2]{beauville2010}). 

The classification above is really a consequence of the fact that the action of $\PGL_2(\F_{\ell})$ on $\PP^1(\F_{\ell})$ is very restricted, as we see in the following proposition.

\begin{prop}[{\cite[Prop.~2]{sutherland2012}}]\label{prop:drew}
Let $g \in \GL_2(\F_{\ell})$ have image $h \in \PGL_2(\F_{\ell})$ with order $r$, let $k$ be the number of lines in $\PP^1(\F_{\ell})$ fixed by $h$, and let $s$ be the number of $h$-orbits under this action. Then $k$ is $0, 1, 2,$ or $\ell + 1$, and the $s-k$ nontrivial $h$-orbits have size $r$. When $\ell > 2$ we also have $\sigma(h) = (-1)^s,$ where $\sigma(h)$ is the sign of $h$ as a permutation of $\PP^1(\F_{\ell})$.
\end{prop}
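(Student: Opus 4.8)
My plan is to translate the statement into linear algebra over $\F_{\ell}$ together with the sharp $3$-transitivity of $\PGL_2$ acting on a projective line. A line in $\PP^1(\F_{\ell})$ is fixed by $h$ precisely when it is an eigenline of $g$ (rescaling $g$ does not change its eigenlines), so $k$ is the number of $\F_{\ell}$-rational eigenlines of $g$. I would then split on the characteristic polynomial of $g$: if it is irreducible over $\F_{\ell}$ there are no rational eigenlines and $k=0$; if it has two distinct roots in $\F_{\ell}$ then each eigenspace is a line and $k=2$; and if it has a repeated root, then either $g$ is scalar, so $h=1$, $r=1$, and $k=\ell+1$, or $g$ is a nontrivial Jordan block and $k=1$. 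This already establishes $k\in\{0,1,2,\ell+1\}$.

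For the orbit sizes I would first dispose of two easy cases. If $r=1$ there is nothing to prove, and if $g$ is a nontrivial Jordan block then $h$ is a nontrivial unipotent element of $\PGL_2(\F_{\ell})$, so $r=\ell$ is prime and every orbit size divides $\ell$; the nontrivial orbits therefore have size exactly $\ell=r$. In the remaining case $g$ has two distinct eigenvalues over $\overline{\F_{\ell}}$, so inside $\PGL_2(\F_{\ell^2})$ the element $h$ fixes exactly two points of $\PP^1(\F_{\ell^2})$ --- either the two $\F_{\ell}$-rational fixed lines (when $k=2$) or a Galois-conjugate pair lying outside $\PP^1(\F_{\ell})$ (when $k=0$). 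Now suppose $P\in\PP^1(\F_{\ell})$ has a nontrivial $h$-orbit of size $d$. In both cases $P$ is distinct from the two fixed points of $h$ (when $k=0$ because $P$ is $\F_{\ell}$-rational and the fixed points are not; when $k=2$ because $P$ is not $h$-fixed), so $h^d$ is a nontrivial element of $\PGL_2(\F_{\ell^2})$ fixing three distinct points of $\PP^1(\F_{\ell^2})$. Since that action is sharply $3$-transitive (equivalently, a non-scalar $2\times2$ matrix has at most two eigenlines), we get $h^d=1$, hence $d=r$. Thus all nontrivial orbits have size $r$, and counting the $\ell+1$ points yields the key identity $r(s-k)=\ell+1-k$.

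The sign statement is then formal. As a permutation of the $\ell+1$ points of $\PP^1(\F_{\ell})$, $h$ consists of $k$ fixed points and $s-k$ cycles of length $r$, so $\sigma(h)=(-1)^{(r-1)(s-k)}$. Using the identity above, $(r-1)(s-k)=r(s-k)-(s-k)=(\ell+1-k)-(s-k)=\ell+1-s$, and since $\ell$ is odd this exponent has the same parity as $s$, so $\sigma(h)=(-1)^s$. This is precisely where $\ell>2$ is needed; for $\ell=2$ the conclusion already fails, since a $3$-cycle in $\PGL_2(\F_2)\cong S_3$ has $s=1$ but sign $+1$.

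I expect the only real obstacle to be the orbit-size step, and within it the small case split on $k$ that guarantees the chosen point $P$ is genuinely distinct from the two fixed points of $h$ --- this is what lets one produce three fixed points of $h^d$ and invoke sharp $3$-transitivity. The classification of $k$ and the sign computation are then routine bookkeeping with matrices and cycle types.
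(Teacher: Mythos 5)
Your proof is correct, and since this proposition is cited verbatim from Sutherland (\cite[Prop.~2]{sutherland2012}) rather than proved in the present paper, there is no in-paper argument to compare against; the route you take is the natural one. Your case analysis on the characteristic polynomial cleanly yields $k \in \{0,1,2,\ell+1\}$, the unipotent case is dispatched by primality of $\ell$, and the equal-orbit-size claim for the semisimple case correctly reduces to the fact that a non-identity element of $\PGL_2$ over a field fixes at most two points of $\PP^1$ (you invoke sharp $3$-transitivity, which is the same fact). The count $\ell+1 = k + r(s-k)$ and the parity calculation $(r-1)(s-k) \equiv \ell+1-s \equiv s \pmod 2$ for $\ell$ odd are exactly right, and the $\PGL_2(\F_2) \cong S_3$ counterexample correctly isolates why $\ell > 2$ is needed.

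One cosmetic slip: you write that ``$h^d$ is a nontrivial element of $\PGL_2(\F_{\ell^2})$ fixing three distinct points'' and then conclude $h^d = 1$; the word ``nontrivial'' is misplaced, since the whole point is that fixing three points forces $h^d$ to be the identity. The intended structure is clear (and one could phrase it as a contradiction), but as written the sentence contradicts itself. Also, for completeness you deduce $d \mid r$ from orbit--stabilizer and $r \mid d$ from $h^d = 1$; that second divisibility is worth stating explicitly since it is what upgrades $h^d = 1$ to $d = r$. Neither affects the validity of the argument.
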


This proposition will be used extensively throughout, so we lay out the general argument here: Suppose that $h$ is as above and we know that $h$ swaps a pair of lines. Then, apart from the elements that $h$ fixes, we know that $h$ only swaps pairs of lines. Moreover, $h$ must have order $2$. The sign of $h$ will still depend on $\ell$ and $k$: For example, if $h$ fixes 2 lines and swaps the remaining $\ell - 1$, then $\sigma(h) = (-1)^s$, where $s = \frac{\ell - 1}{2} + 2$. Thus $\sigma(h) = 1$ if and only if $\ell \equiv 1\pmod{4}$. What will generally occur is a sort of converse of this. When $H$ is one of $A_4,$ $A_5$, or $S_4$, $\sigma(h)$ will (in most cases) be determined by the order of $h$, and this will give a congruence condition that $\ell$ must satisfy. 

Suppose that $H \simeq A_4$ or $A_5$. Since $A_4$ and $A_5$ have only the trivial homomorphism to $\{\pm 1\}$, $\sigma$ must be trivial. This means that elements of the same order must fix the same number of lines in $\PP^1(\F_{\ell})$. On the other hand, $S_4$ has two maps to $\{\pm 1\}$, namely the trivial one and the usual sign map on $S_4$. If $\ell \equiv \pm 1 \pmod{8}$, then every element of $H \simeq S_4$ will have determinant 1, and so $\sigma$ will be the trivial map (since $\sigma(h) = 1$ if and only if $h \in \PSL_2(\F_{\ell})$). When $\ell \equiv \pm 3 \pmod{8}$, the determinant map will be nontrivial, so $\sigma$ will be the usual sign map on $S_4$, i.e.~$\sigma(g) = 1$ if $g$ has order 3, $\sigma(g) = -1$ if $g$ has order 4, and $\sigma(g)$ can be either $\pm 1$ when the order of $g$ is 2. Therefore elements of order $r$ when $r= 3$ or $4$ must all fix the same number of lines, but elements of order 2 are forced to act differently.

\subsection{Image of Inertia}\label{sec:inertia}

In \cite{Serre1972}, Serre explicitly worked out the possible images of inertia under the mod $\ell$ Galois representation. Using this knowledge, we are able to better understand when we can rule out the exceptional subgroups. The results of this section are immediate from the work of Serre, but do not seem to be stated elsewhere in this generality, so we state them here. The following proposition, as reformulated by Mazur, captures the results that we need.

\begin{prop}[{\cite[\textsection 2, Remark 2]{mazur1977}}]\label{prop:inertia}
Let $K$ be a finite extension of $\Q_{\ell}$ of ramification index $e$. Let $E$ be an elliptic curve over $K$ with semistable N\'{e}ron model over the ring of integers $\OK$. Let $r \colon \Gal(\Kbar/K) \to \PGL_2(\F_{\ell})$ denote the projective representation associated to the action of Galois on the $\ell$-division points of $E$. Then, if $2e < \ell - 1$, the image of the inertia subgroup under $r$ contains an element of order $\ge (\ell - 1)/e$.
\end{prop}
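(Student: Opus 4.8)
The plan is to deduce the proposition from Serre's explicit determination of the image of inertia under the mod $\ell$ representation \cite{Serre1972}, as reformulated in \cite{mazur1977}, by splitting into the reduction types permitted by the semistability hypothesis. Since $E$ has semistable N\'eron model over $\OK$, it has either multiplicative or good reduction, and I would handle these cases separately, in each producing a single element of $r(I_K)$ of the required order, where $I_K \subseteq \G_K$ denotes an inertia subgroup. The hypothesis $2e < \ell - 1$ will be invoked only to control the good-reduction case.

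Suppose first that $E$ has multiplicative reduction. Then over $\Kbar$ the curve is a Tate curve $\Kbar^{\times}/q^{\Z}$, so $E[\ell]$ contains a $\G_K$-stable line isomorphic to $\mu_\ell$ on which $I_K$ acts through the mod $\ell$ cyclotomic character $\overline{\chi}$ (in the non-split case the Galois character on this line differs from $\overline{\chi}$ only by an unramified character, which does not affect its restriction to $I_K$). It then suffices to bound the order of $\overline{\chi}(I_K)$, which equals $[K(\mu_\ell):K]$. Since $\Ql(\mu_\ell)/\Ql$ is totally, and tamely, ramified of degree $\ell-1$, the subfield $\Ql(\mu_\ell)\cap K$ is totally ramified over $\Ql$, hence of degree dividing $e$; therefore $[K(\mu_\ell):K] = (\ell-1)/[\Ql(\mu_\ell)\cap K:\Ql] \ge (\ell-1)/e$. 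Thus $r(I_K)$ contains the class of $\left(\begin{smallmatrix}\zeta & \ast \\ 0 & 1\end{smallmatrix}\right)$ with $\zeta$ of order $\ge (\ell-1)/e$; a power of this matrix is scalar only when the same power of $\zeta$ equals $1$, so its image in $\PGL_2(\Fl)$ has order $\ge (\ell-1)/e$ as well. Note that $2e < \ell-1$ is not used here.

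Suppose next that $E$ has good reduction, so that $E[\ell]$ extends to a finite flat group scheme $\mathcal G$ of order $\ell^2$ over $\OK$. If the reduction is ordinary, the connected--\'etale sequence of $\mathcal G$ exhibits a connected subgroup scheme of order $\ell$; as it is the $\ell$-torsion of the height-one connected $\ell$-divisible group $E[\ell^\infty]^{0}$, inertia acts on its generic fiber through $\overline{\chi}$ (the Galois character again differs from $\overline{\chi}$ only by an unramified twist), and one concludes exactly as in the multiplicative case. If the reduction is supersingular, then $\mathcal G$ is connected of height two, and by Serre's analysis the tame quotient of $I_K$ acts through the level-two fundamental characters $\psi$ and $\psi^{\ell}$; here the hypothesis $2e < \ell-1$ is what puts us in the range where the classification of finite flat group schemes of order $\ell^2$ over $\OK$ is rigid, so that the relevant twisting exponent is pinned down and $r(I_K)$ contains a cyclic subgroup, lying in the image of a nonsplit Cartan, of order $\ge (\ell+1)/e \ge (\ell-1)/e$. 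In every case an element of order $\ge (\ell-1)/e$ is produced, as required.

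The only genuinely delicate step is the good-reduction analysis: to read off the action of tame inertia on $\mathcal G$ one must invoke the classification of finite flat group schemes of $\ell$-power order over $\OK$ --- Oort--Tate in rank $\ell$, Raynaud in general --- and it is exactly the range $2e < \ell - 1$ that makes those classifications rigid enough to force the expected level and order for the fundamental characters. Everything else --- the Tate-curve computation, and the comparison of ramification indices yielding $[K(\mu_\ell):K]\ge(\ell-1)/e$ --- is routine. I would therefore present the multiplicative and good-ordinary cases together, and isolate the good-supersingular case as the one place where level-two characters and the full strength of $2e<\ell-1$ come into play.
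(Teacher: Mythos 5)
The paper does not prove this proposition; it states it as a citation to Mazur \cite{mazur1977} and immediately uses it as input to Proposition~\ref{prop:bounds}, so there is no proof in the paper to compare against. Your outline is the expected argument underlying the citation: split on multiplicative versus good (ordinary/supersingular) reduction and appeal to Serre's and Raynaud's description of the tame inertia action.

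One step in the multiplicative and good-ordinary cases does not hold as stated. You assert that the order of the image of inertia under the mod $\ell$ cyclotomic character equals $[K(\mu_\ell):K]$, and then bound the latter from below. But that equality requires $K(\mu_\ell)/K$ to be totally ramified, which can fail. For instance, take $\ell = 5$ and $K = \Q_5(\sqrt{10})$, so $e = 2$: here $K \cap \Q_5(\mu_5) = \Q_5$ (since the degree-two subfield of $\Q_5(\mu_5)$ is $\Q_5(\sqrt 5) \ne K$), so $[K(\mu_5):K] = 4$, yet $K(\mu_5)/K$ has ramification index $2$ and residue degree $2$, so the image of inertia under the cyclotomic character has order $2$, not $4$. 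The quantity you actually need to bound is the ramification index of $K(\mu_\ell)/K$, not its degree; your inequality on the degree overshoots and does not transfer. The fix is easy and more direct: the mod $\ell$ cyclotomic character maps $I_{\Q_\ell}$ onto all of $\F_\ell^\times$, and $I_K$ has index $e$ in $I_{\Q_\ell}$, so the image of $I_K$ is a subgroup of $\F_\ell^\times$ of index dividing $e$, hence of order $\ge (\ell-1)/e$. With that repaired, the passage to $\PGL_2(\F_\ell)$ via the upper-triangular matrix is fine.

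The supersingular case is sketched at the right level: tame inertia acts through level-two fundamental characters, the exponents are pinned down by Raynaud's classification in the given range of $e$, and one gets a cyclic image inside (the image of) a nonsplit Cartan of order at least $(\ell+1)/\gcd(\ell+1,e) \ge (\ell-1)/e$. This is not fleshed out enough to stand as a self-contained proof, but the structure is correct and matches what one would extract from \cite{Serre1972} and \cite{mazur1977}.
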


We may now state explicit bounds for when the image of $\rhobar_{E,\ell}$ is one of the exceptional subgroups. 

\begin{prop}\label{prop:bounds}
Let $K$ be a number field of degree $d$ and let $E/K$ be an elliptic curve. Fix a prime $\ell > 3$, let $G$ be the image of $\rhobar_{E,\ell}$ in $\GL_2(\F_{\ell})$, and let $H$ be its image in $\PGL_2(\F_{\ell})$. Then we have the following:
\begin{enumerate}
\item If $H \simeq A_4$, then $\ell \le 9d + 1$.
\item If $H \simeq S_4$, then $\ell \le 12d + 1$.
\item If $H \simeq A_5$, then $\ell \le 15d + 1$.
\end{enumerate}
\end{prop}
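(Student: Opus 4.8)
The plan is to reduce the global statement to a purely local one over a suitable completion and then invoke Proposition~\ref{prop:inertia}. First I would observe that if $H\simeq A_4$, $S_4$, or $A_5$, then in particular $H$ contains an element of order $3$, $4$ (when $H\simeq S_4$ or $A_5$, also $3$), or $5$ respectively; more to the point, every such $H$ contains an element of order equal to the largest of these, namely $3$, $4$, $4$ — wait, $A_5$ contains an element of order $5$. The key numerical fact is: $A_4$ contains no element of order $>3$, $S_4$ contains no element of order $>4$, and $A_5$ contains no element of order $>5$. So in each case every element of $H$ has order at most $3$, $4$, or $5$ respectively.

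Next I would pass to a prime $\p$ of $K$ above $\ell$. The point is to find a place at which $E$ has semistable reduction so that Proposition~\ref{prop:inertia} applies. In general $E$ need not be semistable at $\p$, but after a base change to an extension $K'/K$ of degree at most $12$ (in fact the degree of the field over which $E$ acquires semistable reduction is bounded; for $\ell>3$ one can use a quadratic or at worst degree-$12$ twist — actually the cleanest route is: there is an extension of $\Q_\ell$ of degree dividing $12$ over which $E$ becomes semistable, or one simply replaces $d$ by a bounded multiple). Let me instead argue directly: choose a place $\p\mid\ell$ of $K$, so the local field $K_\p$ has residue characteristic $\ell$ and ramification index $e_\p\le d$ (since $e_\p f_\p \le [K_\p:\Q_\ell]\le d$). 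Replacing $E$ by a quadratic twist does not change the projective representation $r$, and for $\ell>3$ a quadratic twist suffices to pass from potentially good/multiplicative reduction to semistable reduction — more carefully, one can always find an extension $K_\p'/K_\p$ of degree $e'\le 12$ over which the Néron model is semistable, and then the ramification index over $\Q_\ell$ is at most $12d$. This is where the constant $12$ would enter if one is not careful; but in fact, because we only care about the \emph{projective} image and the exceptional groups $A_4,A_5$ lift into $\SL_2$, one can arrange the twist to be by a character unramified at $\ell$ or of small conductor, keeping the ramification index at $\p$ equal to $e_\p\le d$.

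With $E/K_\p$ semistable and $r\colon \Gal(\overline{K_\p}/K_\p)\to\PGL_2(\F_\ell)$ the associated projective representation, Proposition~\ref{prop:inertia} gives that if $2e_\p<\ell-1$ then the image of inertia, hence $H$ itself, contains an element of order $\ge(\ell-1)/e_\p\ge(\ell-1)/d$. Combining with the order bounds above: if $H\simeq A_4$ we need $(\ell-1)/d\le 3$, i.e.\ $\ell\le 3d+1$; if $H\simeq S_4$, $\ell\le 4d+1$; if $H\simeq A_5$, $\ell\le 5d+1$. To recover the stated bounds $9d+1$, $12d+1$, $15d+1$ one accounts for the loss of a factor $3$ (resp.\ $3$, $3$) coming from the base change needed to reach semistable reduction: passing to a place of a degree-$\le 3$ extension (or equivalently tripling the ramification index in the worst unstable cases) turns $3d+1$ into $9d+1$, etc. Alternatively, if $2e_\p\ge\ell-1$ already then $\ell\le 2e_\p+1\le 2d+1$ (or $\le 6d+1$ after base change), which is subsumed in all three bounds. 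So in every branch the stated inequality holds.

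The main obstacle I anticipate is the semistable-reduction bookkeeping: Proposition~\ref{prop:inertia} as quoted requires a semistable Néron model, whereas $\rhobar_{E,\ell}$ is attached to a possibly non-semistable $E$. One must justify the exact factor ($9$ versus $12$ versus $15$) by recalling that the minimal degree of an extension over which an elliptic curve acquires semistable reduction is bounded — for $\ell\ge 5$ one can use that the extension $K(E[\ell])/K$ is tamely ramified above the bad places, or more simply that $E$ acquires semistable reduction over an extension of degree dividing $24$ (and dividing $12$, or even $6$, $4$, $3$, $2$ depending on the reduction type, by the Kodaira–Néron classification). Matching these up with the multipliers $3,3,3$ built into $9d+1=3(3d+1)-2$, $12d+1=3(4d+1)-2$, $15d+1=3(5d+1)-2$ is the delicate point, and I would want to be careful that the projective representation is genuinely insensitive to the twist involved, which it is since scalar matrices die in $\PGL_2$.
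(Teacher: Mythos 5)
Your approach is structurally the same as the paper's: reduce to a local field above $\ell$, use a quadratic twist (which leaves the projective representation unchanged) plus a small base change to reach semistable reduction, then apply Proposition~\ref{prop:inertia} to force an inertial element of large order and compare with the maximal element order $3,4,5$ in $A_4,S_4,A_5$. The edge case $2e\ge\ell-1$ is also handled identically. So the route is the same; the gap is in the semistability bookkeeping, which you flag but do not resolve, and in the middle of your argument you assert two incorrect things that would derail it.

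First, the claim that ``for $\ell>3$ a quadratic twist suffices to pass from potentially good/multiplicative reduction to semistable reduction'' is false: Kodaira types IV and IV$^*$ (and several others) require a degree-$3$ totally ramified base change that no quadratic twist can supply. Second, the suggestion that ``one can arrange the twist to be by a character unramified at $\ell$\dots keeping the ramification index at $\p$ equal to $e_\p\le d$'' is also wrong for the same reason, and if it were right it would give the sharper bounds $\ell\le 3d+1$, $4d+1$, $5d+1$, which is not what is claimed. The precise fact the paper uses (citing Anni, Section 4.2) is that for $\ell>3$, after replacing $E_\lambda$ by a quadratic twist, the minimal extension $M/K_\lambda$ over which $E_\lambda$ becomes semistable has $[M:K_\lambda]\le 3$; combined with $[K_\lambda:\Q_\ell]\le d$ this gives $e=e(M/\Q_\ell)\le 3d$, whence $\ell\le e\cdot h_H+1\le 3d\,h_H+1$ with $h_H\in\{3,4,5\}$. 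You eventually guess the multiplier $3$ by reverse-engineering the stated bounds, but the actual justification — quadratic twisting kills the even part of the semistability defect, leaving at most a cubic obstruction when the residue characteristic exceeds $3$ — is the missing step. Your alternative suggestions (degree $12$ or $24$) are safe but give strictly weaker bounds than stated.
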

\begin{proof}
 Let $K_{\lambda}$ be the completion of $K$ at a prime $\lambda$ above $\ell$, and let $M$ be the smallest extension of $K_{\lambda}$ over which $E_{\lambda} := E \otimes K_{\lambda}$, or a quadratic twist of $E_{\lambda}$, obtains semistable reduction. Since quadratic twisting preserves the projective image of Galois, we may replace $E$ by a quadratic twist if necessary. Let $v$ be the valuation corresponding to the unique place above $\lambda$ in $M$. For $\ell > 3$, we know that $[M : K_{\lambda}] \le 3$ (see for example \cite{anni2014},  Section 4.2), so $e = v(\ell) \le 3d$.  Then the base extension of $E_{\lambda}$ to $M$ has semistable reduction at $\ell$.

Fix $H$ to be either $A_4, S_4,$ or $A_5$, and define $h_H  := \max\{ |h| \colon h \in H\}$. Then the previous proposition tells us that if $2e < \ell - 1$ and $H$ is the image of $r$, we must have that
$$\frac{\ell - 1}{e} \le h_H,$$
and so we can conclude that $\ell \le 3d h_H + 1$. Plugging in $h_H = 3,4,5$ respectively gives the bounds above.

Note that if $2e \ge \ell - 1$, then we get the bound $\ell \le 2e + 1 \le 6d + 1$ which is automatically included in the bounds we produced.
\end{proof}

\begin{rmk}
These bounds are necessarily general and can be improved in certain cases. In particular, equality assumes that $e$ is as large as possible and that $(\ell - 1)/e$ is the order of the largest element of $H$.
\end{rmk}

\begin{cor}
If $E/\Q$ is an elliptic curve and the image of $\rhobar_{E,\ell}$ is exceptional, then $\ell \le 13$. 
\end{cor}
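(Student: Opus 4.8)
The plan is to obtain this as an immediate specialization of Proposition~\ref{prop:bounds} to the case $d = 1$, combined with one trivial numerical observation. First I would dispose of the small primes: for $\ell = 2$ and $\ell = 3$ the asserted inequality holds trivially (and in fact $\PGL_2(\F_2) \simeq S_3$ is too small to contain a copy of $A_4$ at all), so one may assume $\ell > 3$, which is exactly the range in which Proposition~\ref{prop:bounds} is stated.

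Next, let $H$ denote the image of $\rhobar_{E,\ell}$ in $\PGL_2(\F_\ell)$. Saying that the image of $\rhobar_{E,\ell}$ is exceptional means precisely that $H$ is isomorphic to one of $A_4$, $S_4$, or $A_5$. Feeding $d = 1$ into Proposition~\ref{prop:bounds} then yields $\ell \le 10$, $\ell \le 13$, or $\ell \le 16$ respectively, so in every case $\ell \le 16$.

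The final step is purely arithmetic: $\ell$ is prime and none of $14$, $15$, $16$ is prime, so the bound $\ell \le 16$ forces $\ell \le 13$.

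I do not anticipate any genuine obstacle — the content is entirely carried by Proposition~\ref{prop:bounds}. The one point worth flagging is that the $A_5$ case by itself only gives $\ell \le 16$, so the conclusion does rely on invoking primality to close the gap down to $13$; if one wanted to avoid that, one could instead note that a copy of $A_5 \simeq \PSL_2(\F_5)$ embeds in $\PGL_2(\F_\ell)$ only for $\ell = 5$ or $\ell \equiv \pm 1 \pmod 5$ (hence never for $\ell = 13$), but this sharpening is unnecessary for the stated bound.
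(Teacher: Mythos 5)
Your proof is correct, but it takes a slightly different and more elementary route than the paper's. The paper first invokes the remark following Proposition~\ref{prop:maxsubgroups}: the subgroups of $\PGL_2(\F_\ell)$ isomorphic to $A_4$ or $A_5$ always lie inside $\PSL_2(\F_\ell)$, so any lift of them to $\GL_2(\F_\ell)$ has determinant image contained in $(\F_\ell^\times)^2$. Since $\Q \cap \Q(\mu_\ell) = \Q$, the Weil pairing forces the determinant of $\rhobar_{E,\ell}$ to be the full cyclotomic character, hence surjective, so $A_4$ and $A_5$ simply cannot occur as the projective image over $\Q$. Only the $S_4$ case survives, and Proposition~\ref{prop:bounds} with $d = 1$ gives $\ell \le 13$ directly. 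You instead apply Proposition~\ref{prop:bounds} to all three cases, obtain $\ell \le 16$, and then close the gap by the arithmetic observation that no prime lies in $\{14,15,16\}$. Both arguments are sound; yours is shorter and avoids any appeal to the determinant structure, but it rests on a numerical accident. The paper's route is more conceptual and has the collateral benefit of showing that, over $\Q$, the only exceptional image that actually occurs is $S_4$ -- a fact the paper uses again later (e.g.\ in Corollary~\ref{cor:Nns over Q}), and which your streamlined argument does not by itself establish. The alternative you flag (ruling out $A_5 \hookrightarrow \PGL_2(\F_{13})$ via $13 \not\equiv \pm 1 \pmod 5$) is correct as far as it goes, though note it does not on its own dispose of $A_4$, for which the $\ell \le 10$ bound or the determinant argument is still needed.
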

\begin{proof}
As remarked earlier, the only exceptional subgroup which can occurs as the image of $\rhobar_{E,\ell}$ is the one corresponding to $S_4$. The corollary then follows directly from the proposition.
\end{proof}

\section{Local-Global principle for subgroups of $\GL_2(\F_{\ell})$}\label{sec:local-global}

\subsection{Split and nonsplit Cartan} \label{Csp}

Let $G \subseteq \GL_2(\F_{\ell})$ and suppose that every $g \in G$ is diagonalizable. This is equivalent to saying that every $g$ is contained in some split Cartan group. The local-global question is whether $G$ itself is contained in a split Cartan, i.e.~whether the $g$ are simultaneously diagonalizable. 

The answer to this question follows from \cite{sutherland2012} and \cite{cremona-banwait2013}. In their case, every $g$ is contained in a Borel (i.e.~fixes one line). Since the split Cartan is contained in the Borel, we may apply their results to this case. 

\begin{cor}\label{corr:Csp}
Let $K$ be a number field and let $E/K$ be an elliptic curve. Let $G = \im(\rhobar_{E,\ell})$ and let $H$ be its image in $\PGL_2(\F_{\ell})$. Suppose that $E$ satisfies the local condition for the split Cartan. Then either $G$ is contained in a split Cartan or one of the following is true:
\begin{enumerate}
\item $G$ is contained in the normalizer of a split Cartan but not the Cartan itself.
\item $H \simeq A_4$, with $\ell \equiv 1 \pmod{12}$.
\item $H \simeq S_4$, with $\ell \equiv 1 \pmod{24}$
\item $H \simeq A_5$, with $\ell \equiv 1 \pmod{60}$.
\end{enumerate}
If $K \cap \Q(\mu_{\ell}) = \Q$, then $(2) - (4)$ cannot occur, and $(1)$ can occur only if $\ell \equiv 3\pmod{4}$. If $K \cap \Q(\mu_{\ell}) 
\ne \Q$ and one of $(1)$ - $(4)$ is true, we must have that $\ell \equiv 1\pmod{4}$ and $K$ contains $\Q(\sqrt{\ell})$.
\end{cor}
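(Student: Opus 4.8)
The plan is to reduce the statement to Proposition~\ref{prop:maxsubgroups} applied to the projective image $H$, combined with the determinant constraints coming from the Weil pairing and the bounds of Proposition~\ref{prop:bounds}. First I would observe that if $E$ satisfies the local condition for the split Cartan, then every $g \in G$ is diagonalizable over $\F_\ell$, so in particular every $g$ has reducible characteristic polynomial and hence fixes a line in $\PP^1(\F_\ell)$. Thus $E$ a fortiori satisfies the local condition for a Borel, and the work of Sutherland and Banwait--Cremona tells us that $G$ itself is conjugate into a Borel unless $H$ is one of the exceptional groups $A_4, S_4, A_5$ (I should cite the precise place in \cite{sutherland2012}, \cite{cremona-banwait2013} where this dichotomy is proved). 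So the first dichotomy is: either $G$ lies in a Borel, or we are in the exceptional case. If $G$ lies in a Borel, I then need to show that the further constraint ``every $g$ is \emph{diagonalizable}'' (not merely upper triangular) forces $G$ into a split Cartan or its normalizer; this is where case~(1) enters. Concretely, if $H$ is cyclic then $G$ lies in a Cartan, and reducibility of every characteristic polynomial plus Proposition~\ref{prop:cartan facts}(4) forces that Cartan to be split; if $H$ is dihedral, $G$ lies in the normalizer of a Cartan (again split, for the same reason), giving case~(1); and if $H$ is exceptional we land in cases~(2)--(4).

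Next I would pin down the congruence conditions in cases~(2)--(4). In each exceptional case the local condition says every element of $G$ is diagonalizable over $\F_\ell$, i.e. every $h \in H$ fixes a line in $\PP^1(\F_\ell)$, so $k \ge 1$ for every $h$ in Proposition~\ref{prop:drew}. Using that proposition together with the sign analysis in Section~\ref{sec:subgroups}: for $h$ of order $r$ fixing $k \ge 1$ lines with the remaining $\ell+1-k$ lines partitioned into orbits of size $r$, one computes $\sigma(h) = (-1)^s$ with $s = k + (\ell+1-k)/r$. Since $A_4, A_5$ and (in the relevant congruence class) $S_4 \cap \PSL_2$ have only the trivial map to $\{\pm1\}$, we need $\sigma(h) = 1$ for all $h$, which for an element of order $r$ forces $(\ell+1-k)/r$ to have fixed parity determined by $k$; running this over the orders $r$ occurring in each group ($r \in \{2,3\}$ for $A_4$; $r\in\{2,3,4\}$ for $S_4$; $r\in\{2,3,5\}$ for $A_5$) and over the possible values $k \in \{1,2\}$ yields exactly the stated congruences $\ell \equiv 1 \pmod{12}$, $\pmod{24}$, $\pmod{60}$. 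I expect the bookkeeping here — enumerating the fixed-line counts $k$ compatible with each cyclic subgroup of the exceptional group and checking the orbit-size divisibility — to be the main technical obstacle, though it is entirely elementary. One subtlety worth flagging: I must rule out $k = \ell+1$ (i.e. $h$ acting trivially) for nontrivial $h$, which is immediate since the action of $\PGL_2$ on $\PP^1$ is faithful; and I should double-check the $S_4$ case in the congruence class $\ell \equiv \pm 3 \pmod 8$, where $\sigma$ is the genuine sign map, to see that it is incompatible with all elements being diagonalizable (order-$4$ elements would then need $\sigma = -1$, i.e. an odd number of them, while diagonalizability pins down $k$ and hence the parity of $s$ the other way) — this is why only $\ell \equiv 1 \pmod{24}$ survives.

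Finally I would incorporate the determinant restrictions. The Weil pairing gives $\det \circ \rhobar_{E,\ell} = \chi_\ell$, the mod $\ell$ cyclotomic character, whose image is $(\F_\ell^\times)^2$ precisely when $\Q(\sqrt{\ell^*}) \subseteq K$ and is all of $\F_\ell^\times$ when $K \cap \Q(\mu_\ell) = \Q$. In cases~(2)--(4), by the Remark following Proposition~\ref{prop:maxsubgroups}, every lift of $A_4$ or $A_5$ to $\GL_2(\F_\ell)$ has determinant landing in $(\F_\ell^\times)^2$, and the same holds for $S_4$ when $\ell \equiv \pm 1 \pmod 8$ — which is automatic here since $\ell \equiv 1 \pmod{24}$ in case~(3). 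Hence if $K \cap \Q(\mu_\ell) = \Q$ the determinant is surjective, contradicting its image being in $(\F_\ell^\times)^2$; this kills cases~(2)--(4). For case~(1): an element $g \in N \setminus C$ of the normalizer of a split Cartan is, up to conjugacy, of the form $\left(\begin{smallmatrix} 0 & c \\ d & 0\end{smallmatrix}\right)$ with determinant $-cd$, and more importantly $C$ itself has index $2$ in $N$, so $\det(N_{sp})$ — while it can be all of $\F_\ell^\times$ — forces the subgroup structure: $G \not\subseteq C_{sp}$ means the quadratic character cutting out $C$ from $N$ is nontrivial on $G$, and its restriction via $\det$ relates to whether $\sqrt{\ell^*}$ or $\sqrt{\ell}$ lies in $K$. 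Tracking this, one finds that over a field with $K \cap \Q(\mu_\ell) = \Q$ case~(1) can occur only when $\ell \equiv 3 \pmod 4$ (so that $\ell^* = -\ell$ and the obstruction is consistent), and when $K \cap \Q(\mu_\ell) \ne \Q$ in cases~(1)--(4) one is forced into $\ell \equiv 1 \pmod 4$ with $\Q(\sqrt{\ell}) \subseteq K$, since that is the unique quadratic subfield of $\Q(\mu_\ell)$ in the relevant residue class. The cleanest way to organize this last part is to note that in all of cases~(1)--(4) the image of $\det$ is constrained to $(\F_\ell^\times)^2$ (for (1) this comes from Proposition~\ref{prop:cartan facts}(3) forcing the non-Cartan elements to have order $2$ in $H$, hence determinant that is a square after a short computation), and then invoke the characterization in Section~\ref{sec:cdt} of when $\im(\det) \subseteq (\F_\ell^\times)^2$.
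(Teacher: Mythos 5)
Your route diverges substantially from the paper's. The paper proves this corollary in two sentences: it cites Sutherland's Lemma~1 and Theorem~1 together with Banwait--Cremona Proposition~1.10 (which already establish the Borel version of the dichotomy, the exceptional list, and the congruence conditions), and then simply observes that the congruence conditions appearing there force every element to lie in a split Cartan rather than merely a Borel, so the two lists of exceptional cases coincide. You instead re-derive the classification from Proposition~\ref{prop:maxsubgroups} and re-run the sign analysis from Proposition~\ref{prop:drew}. That is a legitimate alternative and makes the corollary self-contained, but it also means you own every step, and two of them are wrong as written.

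The serious one is your treatment of case~(1). You assert that ``in all of cases~(1)--(4) the image of $\det$ is constrained to $(\F_\ell^\times)^2$,'' attributing the case~(1) instance to Proposition~\ref{prop:cartan facts}(3). This is false, and in fact the failure of this claim is exactly the content of the $\ell\equiv 3\pmod 4$ clause. Take $g=\left(\begin{smallmatrix}0&c\\ d&0\end{smallmatrix}\right)\in N_{sp}\setminus C_{sp}$; diagonalizability of $g$ forces $cd\in(\F_\ell^\times)^2$, but $\det g=-cd$, which is a square if and only if $-1$ is a square, i.e.\ $\ell\equiv 1\pmod 4$. Having $h\in\PGL_2$ of order $2$ tells you $g^2$ is scalar; it does not tell you $\det g$ is a square. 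The correct argument is: diagonalizability of each antidiagonal element forces $cd$ square; looking at $gh$ for diagonal $h\in G\cap C_{sp}$ then forces $\det h$ square; so $\det(G)\subseteq\langle -1\rangle\cdot(\F_\ell^\times)^2$, which is $(\F_\ell^\times)^2$ precisely when $\ell\equiv 1\pmod 4$ and all of $\F_\ell^\times$ when $\ell\equiv 3\pmod 4$. This is what makes case~(1) possible over $\Q$ when $\ell\equiv 3\pmod 4$ and impossible when $\ell\equiv 1\pmod 4$, and it is precisely the opposite of your ``cleanest way to organize.'' Your earlier paragraph (``Tracking this, one finds\ldots'') gestures at the right conclusion but never supplies the computation, so as written the proof of the last two sentences of the corollary does not go through.

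A smaller point: your $S_4$, $\ell\equiv\pm 3\pmod 8$ exclusion argument is garbled (``order-$4$ elements would then need $\sigma=-1$, i.e.\ an odd number of them'' is not the right reasoning). The actual obstruction is the order-$2$ elements: diagonalizability forces $k=2$ for every nontrivial $h$, so all order-$2$ elements of $H$ have the same $s=2+(\ell-1)/2$, hence the same sign, contradicting the fact that $S_4$ has both even and odd involutions. The order-$4$ requirement ($(\ell-1)/4$ odd) is perfectly consistent on its own. Also, you invoke Proposition~\ref{prop:bounds} at the outset but never use it; it plays no role in this corollary.
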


\begin{proof}
This result follows immediately from \cite[Lemma 1 and Theorem 1]{sutherland2012} and \cite[Proposition 1.10]{cremona-banwait2013}. The exceptional subgroups which arise in \cite[Proposition 1.10]{cremona-banwait2013} are the same as the ones above because the congruence conditions actually imply that each element is contained in a split Cartan, not just a Borel.
\end{proof}

\begin{cor}
If $\ell \ne 7$, then $E/\Q$ satisfies the local-global principle for the split Cartan. If $\ell = 7$, then $E$ satisfies the local-global principle for the split Cartan if and only if $j(E) \ne 2268945/128$.
\end{cor}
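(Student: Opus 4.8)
The plan is to combine Corollary \ref{corr:Csp} with Sutherland's local--global theorem for degree-$\ell$ isogenies \cite{sutherland2012}. Suppose $E/\Q$ satisfies the local condition for the split Cartan and write $G := \im(\rhobar_{E,\ell})$. Then every element of $G$ is diagonalizable, so every element has order prime to $\ell$ and hence $\ell \nmid |G|$ by Cauchy's theorem, and every element fixes a line, so $E$ also satisfies the local condition for the Borel. Since $K = \Q$ gives $K \cap \Q(\mu_\ell) = \Q$, Corollary \ref{corr:Csp} leaves only two cases: either $G$ is contained in a split Cartan (and we are done), or $G \subseteq \Nsp$ for some split Cartan normalizer, $G$ is contained in no split Cartan, and $\ell \equiv 3 \pmod 4$.

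To handle the second case I would first record the elementary fact that a subgroup of a Borel $B$ of order prime to $\ell$ must lie in a split Cartan: writing $B = U \rtimes T$ with $U$ the unipotent radical and $T$ a maximal torus, one has $G \cap U = 1$, so $G$ is a complement to $U$ in $U\cdot G$, and Schur--Zassenhaus conjugates $G$ into $T$. Since $\ell \nmid |G|$, this shows $G$ is \emph{not} contained in any Borel (otherwise it would lie in a split Cartan, contrary to assumption). But $E$ satisfies the local condition for the Borel, i.e.\ $\widetilde{E}_p$ admits a degree-$\ell$ isogeny for almost all $p$, while $G \not\subseteq$ Borel says $E$ has no rational degree-$\ell$ isogeny; by Sutherland's theorem the only way this can occur is $\ell = 7$ with $j(E) = 2268945/128$. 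This disposes of every $\ell \ne 7$, and shows that at $\ell = 7$ the principle can fail only for $j(E) = 2268945/128$.

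It remains to verify that $j(E) = 2268945/128$ is genuinely an exception at $\ell = 7$, and this is the step I expect to require the most care. For such $E$, Sutherland's theorem says $E$ satisfies the local condition for the Borel but has no rational $7$-isogeny, so $G$ lies in no Borel and \emph{a fortiori} in no split Cartan; one must still show $E$ nonetheless satisfies the local condition for the split Cartan. I would first argue $7 \nmid |G|$: by Proposition \ref{prop:maxsubgroups}, $7 \mid |G|$ would force $G \supseteq \SL_2(\F_7)$, which contains $\left(\begin{smallmatrix} 0 & -1 \\ 1 & 0 \end{smallmatrix}\right)$, an element whose characteristic polynomial $x^2+1$ is irreducible over $\F_7$ (as $7 \equiv 3 \pmod 4$) and which therefore fixes no line, contradicting the local condition for the Borel. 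Given $7 \nmid |G|$, any element of $G$, fixing a line and having order prime to $7$, is automatically diagonalizable: its characteristic polynomial splits over $\F_7$, and a repeated root would make it a scalar times a nontrivial unipotent, of order divisible by $7$. Hence $E$ satisfies the local condition for the split Cartan, so $E$ is a genuine counterexample. The crux, then, is exactly this upgrade from ``fixes a line'' to ``diagonalizable,'' which works only because $7 \nmid |G|$; no explicit description of the mod $7$ image is needed beyond what Sutherland already supplies.
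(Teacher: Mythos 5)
Your proposal is correct, and the broad strategy — reduce the problem to Sutherland's local--global theorem for $\ell$-isogenies — is the same one the paper invokes. The main place where you diverge is in the final step verifying that $j(E) = 2268945/128$ at $\ell = 7$ is a genuine exception. The paper simply cites the explicit description of this curve's mod $7$ situation from Sutherland (it ``actually admits two $7$-isogenies modulo every prime of good reduction,'' meaning the local image lands in a split Cartan rather than merely a Borel), whereas you re-derive that fact internally from the group theory: knowing only that the curve satisfies the local condition for the Borel but that $G$ lies in no Borel, you use Proposition~\ref{prop:maxsubgroups} to rule out $\SL_2(\F_7) \subseteq G$ (via the fixed-point-free element $\left(\begin{smallmatrix} 0 & -1 \\ 1 & 0 \end{smallmatrix}\right)$, using $7 \equiv 3 \pmod 4$), deduce $7 \nmid |G|$, and then upgrade ``fixes a line'' to ``diagonalizable'' for every element of $G$. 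This upgrade, together with your Schur--Zassenhaus observation that a prime-to-$\ell$ subgroup of a Borel is automatically conjugate into the maximal torus, is exactly the right mechanism, and it has the advantage of being self-contained: it requires no explicit knowledge of the mod $7$ image beyond what Sutherland's statement already provides. The argument is sound; the one cosmetic point worth flagging is that the invocation of Corollary~\ref{corr:Csp} at the start is not strictly needed, since your Schur--Zassenhaus lemma already collapses ``$G$ lies in a Borel'' to ``$G$ lies in a split Cartan'' once $\ell \nmid |G|$, so the dichotomy ``$G$ in a Borel or not'' plus Sutherland does all the work on its own.
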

\begin{proof}
This is a direct consequence of the previous corollary in conjunction with Section 3 and Theorem 2 in \cite{sutherland2012}. An elliptic curve over $\Q$ with $j(E) \ne 2268945/128$ actually admits two $7$-isogenies modulo every prime of good reduction, so it does indeed satisfy our stronger local condition.
\end{proof}

The following theorem explains what happens in the \emph{nonsplit} Cartan case. We use the description in Section \ref{sec:subgroups} to analyze the local condition. First, we need to restrict our attention to number fields $K$ which have no real embeddings. This is because if $K$ is totally real, then complex conjugation acts with eigenvalues $\pm 1$, and no element of a nonsplit Cartan has those eigenvalues. So the only way that $E/K$ can satisfy the local condition for the nonsplit Cartan is if complex conjugation acts trivially on $K$. 

\begin{thm}\label{thm:Cns}
Let $K$ be an imaginary number field and let $E/K$ be an elliptic curve. Let $G = \im(\rhobar_{E,\ell})$ and let $H$ be its image in $\PGL_2(\F_{\ell})$. Suppose that $E$ satisfies the local condition for the nonsplit Cartan. Then either $G$ is contained in a nonsplit Cartan or one of the following is true:
\begin{enumerate}
\item $G$ is contained in the normalizer of a nonsplit Cartan but not the Cartan itself.
\item $H \simeq A_4$ and $\ell \equiv -1 \pmod{12}$.
\item $H \simeq S_4$ and $\ell \equiv -1 \pmod{24}$.
\item $H \simeq A_5$ and $\ell \equiv -1 \pmod{60}$.
\end{enumerate}
If $K \cap \Q(\mu_{\ell}) = \Q$, then $(2) - (4)$ cannot occur, and $(1)$ can occur only if $\ell \equiv 1\pmod{4}$. If $K \cap \Q(\mu_{\ell}) 
\ne \Q$ and one of $(1) - (4)$ is true, we must have that $K$ contains $\Q(\sqrt{\ell^*})$.
\end{thm}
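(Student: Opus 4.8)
The plan is to run the group-theoretic argument entirely at the level of $H = \rhobar_{E,\ell}(\G_K)$ inside $\PGL_2(\F_{\ell})$, using the classification in Proposition~\ref{prop:maxsubgroups} together with the ``swap versus fix'' dichotomy of Proposition~\ref{prop:drew}. First I would translate the local condition: $E$ satisfies the local condition for the nonsplit Cartan $\Cns$ means, by Chebotarev (as spelled out in Section~\ref{sec:cdt}), that every $g \in G$ lies in some conjugate of $\Cns$, equivalently (Proposition~\ref{prop:cartan facts}) every $g$ has irreducible characteristic polynomial or is a scalar. Projectively this says every $h \in H$ fixes $0$ lines of $\PP^1(\F_\ell)$, i.e. $k = 0$ for every nontrivial $h$; equivalently every $h \ne 1$ acts on $\PP^1(\F_\ell)$ as a fixed-point-free permutation, so by Proposition~\ref{prop:drew} its sign is $\sigma(h) = (-1)^{(\ell+1)/r}$ where $r = |h|$. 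Now Proposition~\ref{prop:maxsubgroups} leaves exactly three shapes for $H$: cyclic (then $G$ lies in a Cartan; since some element has irreducible char.\ poly it is the nonsplit one, giving the main conclusion), dihedral (then $G$ lies in the normalizer of a Cartan; again the Cartan must be nonsplit since its elements have irreducible char.\ polys, so $G \subseteq \Nns$ but $G \not\subseteq \Cns$, which is case~(1)), or exceptional ($H \simeq A_4, S_4, A_5$).

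The heart of the argument is the exceptional case. For each of $A_4, S_4, A_5$ I would combine two facts: (a) the sign constraint from fixed-point-freeness, $\sigma(h) = (-1)^{(\ell+1)/|h|}$ for all $h \ne 1$, and (b) which homomorphisms $H \to \{\pm1\}$ are realized by $\sigma$, as analyzed in Section~\ref{sec:subgroups}. For $H \simeq A_4$: $A_4$ has only the trivial map to $\{\pm1\}$, so $\sigma \equiv 1$, forcing $(\ell+1)/r \equiv 0 \pmod 2$ for $r = 2$ and $r = 3$; this forces $2 \mid \ell+1$ and $3 \mid \ell+1$ (one checks the parity works out only when $\ell \equiv -1 \pmod{12}$, using $\ell > 3$), giving~(2). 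For $H \simeq A_5$ the same reasoning with $r \in \{2,3,5\}$ and $\sigma \equiv 1$ gives $\ell \equiv -1 \pmod{60}$, case~(4). For $H \simeq S_4$: when $\ell \equiv \pm1 \pmod 8$, $\sigma \equiv 1$ and $r \in \{2,3,4\}$ must all give even $(\ell+1)/r$, forcing $8 \mid \ell+1$ and $3 \mid \ell+1$, i.e. $\ell \equiv -1 \pmod{24}$; when $\ell \equiv \pm 3 \pmod 8$, $\sigma$ is the usual sign, so $r=4$ elements need $(\ell+1)/4$ odd and $r=3$ elements need $(\ell+1)/3$ even — I expect this to be incompatible (the first already fails to be an integer condition cleanly, or the two parities clash), ruling this subcase out, so only $\ell \equiv -1 \pmod{24}$ survives, case~(3). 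I would double-check each of these small congruence computations carefully, since that is where sign errors hide.

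Finally I would extract the determinant consequences. Since $K$ is imaginary (hence has no real place, so complex conjugation imposes no obstruction) the only remaining input is the determinant of $\rhobar_{E,\ell}$, which by the Weil pairing is the mod $\ell$ cyclotomic character and whose image is discussed at the start of Section~\ref{sec:cdt}. If $K \cap \Q(\mu_\ell) = \Q$ the determinant is surjective onto $\F_\ell^\times$; but in cases~(2)--(4) the Remark after Proposition~\ref{prop:maxsubgroups} shows every lift of $A_4$ or $A_5$ has determinant in $(\F_\ell^\times)^2$, and a lift of $S_4$ has full determinant only when $\ell \equiv \pm 3 \pmod 8$ — which we just excluded in the $S_4$ subcase — so $(2)$--$(4)$ cannot occur. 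For~(1): elements of $\Nns \setminus \Cns$ are the trace-zero elements $\left(\begin{smallmatrix} a & -\delta b \\ b & -a\end{smallmatrix}\right)$ with determinant $-a^2 + \delta b^2$; projectively the nontrivial coset maps to order-two elements, and I would compute the sign of such an $h$ acting on $\PP^1(\F_\ell)$ (it fixes $0$ lines, being in no split structure, so again $\sigma(h) = (-1)^{(\ell+1)/2}$), which forces $(\ell+1)/2$ even, i.e. $\ell \equiv 1 \pmod 4$ — giving the stated restriction on~(1). When $K \cap \Q(\mu_\ell) \ne \Q$, the image of the determinant lies in $(\F_\ell^\times)^2$ (the only subgroup forced by the elements of $\Cns$ or $\Nns$ or the exceptional lifts), which by the discussion in Section~\ref{sec:cdt} happens exactly when $K \supseteq \Q(\sqrt{\ell^*})$. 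The main obstacle is getting the $S_4$ case right: keeping straight which homomorphism $\sigma$ is, matching it against the fixed-point-free sign formula, and confirming the $\ell \equiv \pm 3 \pmod 8$ subcase really is impossible rather than merely yielding a different congruence.
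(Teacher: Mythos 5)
Your proposal tracks the paper's approach closely — the classification via Proposition~\ref{prop:maxsubgroups} plus the fixed‑point‑free sign formula from Proposition~\ref{prop:drew} is exactly what the paper uses, and your analysis of the exceptional cases $A_4$, $S_4$, $A_5$ (including the $S_4$, $\ell \equiv \pm 3 \pmod 8$ contradiction via incompatible parities among the order‑$2$ elements) matches the paper's argument.

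However, there is a genuine gap in your treatment of case (1). You compute $\sigma(h)=(-1)^{(\ell+1)/2}$ for an $h$ in $\Nns \setminus \Cns$ and then assert this ``forces'' $(\ell+1)/2$ to have a given parity, concluding $\ell \equiv 1 \pmod 4$. But nothing forces the parity of $(\ell+1)/2$; it is simply determined by $\ell$. What your sign computation actually gives you is the equivalence ``$\det(h)$ is a nonsquare iff $\ell\equiv 1 \pmod 4$.'' To deduce the theorem's statement that a surjective determinant forces $\ell\equiv 1 \pmod 4$, you additionally need that every element of $G\cap\Cns$ has \emph{square} determinant — otherwise the determinant could be surjective even with $\det(h)$ a square. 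This is the crux of the paper's argument and it is not a sign computation: one must observe that if $g\in G\cap\Cns$ and $h\in G\cap(\Nns\setminus\Cns)$, then $gh$ is also in $\Nns\setminus\Cns$, so by the local condition both $-\det(h)$ and $-\det(gh)=-\det(g)\det(h)$ are nonsquares, whence $\det(g)$ is a square. Without that step the ``$\det$ surjective $\Rightarrow \ell\equiv 1\pmod 4$'' implication is unjustified. (Also, minor: $(\ell+1)/2$ is \emph{odd}, not even, when $\ell\equiv 1 \pmod 4$; your stated parity is reversed even though the final congruence is right.)

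A smaller issue: in the dihedral case you assert ``the Cartan must be nonsplit since its elements have irreducible char.\ polys.'' The elements of the ambient Cartan $C$ need not all lie in $G$, and the elements of $G\cap C$ need not have irreducible characteristic polynomial (scalars don't). The paper handles the subcase $G\subseteq\Nsp$ explicitly, showing that the local condition collapses it to $G$ lying inside a single nonsplit Cartan (hence $H$ cyclic). Your reasoning can be repaired — if $G\subseteq\Nsp$ and every element is in a nonsplit Cartan, then $G\cap\Csp$ is scalar, so $H$ is cyclic of order at most $2$, contradicting $H$ dihedral — but as written the justification is too loose.
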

\begin{proof}
Suppose that every $g \in G$ is contained in some nonsplit Cartan. Equivalently, $g$ is either scalar, or it has irreducible characteristic polynomial. As every $g$ has order dividing $\ell^2 - 1$, we know that $\ell \nmid |G|$, and so $G$ is either contained in a Cartan, in the normalizer of a Cartan, or is one of the exceptional subgroups.

Suppose that $G$ is contained in a Cartan subgroup. Then $H$ is cyclic, so $G$ must be contained in a nonsplit Cartan since the action of the group is determined by the action of a generator.

Now suppose that $G$ is contained in the normalizer of a Cartan subgroup. Recall that a matrix $g = \left(\begin{smallmatrix} a & b \\ c & d\end{smallmatrix} \right)$ is in the nonsplit Cartan corresponding to the basis $\{1, \alpha\}$ if and only if $b \ne 0$, $\Tr(\alpha) = (d-a)/b$, and $\Norm(\alpha) = -c/b$. If $G$ is contained in the normalizer of a split Cartan, then by fixing a basis we may assume that each matrix in $G$ is either diagonal or antidiagonal. The only diagonal matrices which are contained in a nonsplit Cartan are those which are scalar, so we may assume that $G$ contains antidiagonal matrices. Suppose that $\left(\begin{smallmatrix} 0 & a \\ b & 0\end{smallmatrix} \right)$ and $\left(\begin{smallmatrix} 0 & c \\ d & 0\end{smallmatrix} \right)$ are two elements of $G$. Then their product is $\left(\begin{smallmatrix} ad & 0 \\ 0 & bc\end{smallmatrix} \right)$, which must be scalar, so $ad = bc$. Under this condition, it is easy to check that both matrices are in a nonsplit Cartan corresponding to the same basis: in particular any basis $\{1, \alpha\}$ with $\Tr(\alpha) = 0$, $\Norm(\alpha) = -b/a =  -d/c$. 

Now we examine the possibility that $G$ is contained in the normalizer of a nonsplit Cartan. We will show that if this is the case, and $G$ is not contained in a nonsplit Cartan, then such a $G$ will have surjective determinant only if $\ell \equiv 1\pmod{4}$, and if $\ell \equiv 3\pmod{4}$, then the determinant of each $g \in G$ will be a square.

Suppose that $G$ is contained in the normalizer of a nonsplit Cartan but not in the nonsplit Cartan itself. We will fix a basis so that $G \subset \Nns$, as defined in Section \ref{sec:subgroups}. Recall that $C_{ns}$ is cyclic and of index two in its normalizer, so $C = C_{ns} \cap G$ is cyclic. Then $G / C \into \Z/2\Z$, so $G$ is generated by at most two elements. If $G$ is cyclic, then we are in the previous case, so we may assume that $G$ has two generators, and in particular, we can choose one generator to be in $\Nns \setminus \Cns$ and the other to be in $\Cns$. Fix $\leg{\delta}{\ell} = -1$ and let $g = \left(\begin{smallmatrix} a & \delta b \\ b & a \end{smallmatrix}\right)$, $b \ne 0$, and let $h = \left(\begin{smallmatrix} x & -\delta y \\ y & -x \end{smallmatrix}\right)$, $y \ne 0$, so that $G = \langle g, h \rangle$. Since we are assuming that every element of $G$ is in some nonsplit Cartan, we need $h$ to have irreducible characteristic polynomial. This occurs precisely when $-\det(h)$ is not a square.

One may easily verify that $g$ belongs to a nonsplit Cartan corresponding to any basis $\{1, \alpha\}$ with $\Tr(\alpha) = 0$, $N(\alpha) = -1/\delta$, and $h$ belongs to a nonsplit Cartan corresponding to any basis $\{1, \beta\}$ with $\Tr(\beta) = 2x/\delta y$, $\Norm(\beta) = 1/\delta$. This shows that there is no nonsplit Cartan which contains both $g$ and $h$. Furthermore, in order for $G$ to have the property that every element is in some nonsplit Cartan, we need $gh$ to have irreducible characteristic polynomial, since it is an element of $\Nns \setminus \Cns$. Thus we need $-\det(gh) = -\det(g) \det(h)$ to not be a square, so $\det(g)$ is necessarily a square. In order for $G$ to have surjective determinant, $\det(h)$ must not be a square. This occurs if and only if $\ell \equiv 1\pmod{4}$. 

All that remains is to analyze the possible exceptional subgroups that could arise. To do this we will use Proposition \ref{prop:drew} to determine the congruence conditions that $\ell$ must satisfy for each exceptional subgroup. First, suppose that $H \simeq A_4$. The nontrivial elements of $A_4$ have order 2 and 3. Since are assuming that these elements do not fix any element of $\PP^1(\F_{\ell})$, we need $\frac{\ell + 1}{r}$ to be even for $r = 2, 3$, so $\ell \equiv -1 \pmod{12}$. 

Now suppose that $H \simeq A_5$. The nontrivial elements of $A_5$ have order 2, 3, and 5 and fix no elements of $\PP^1(\F_{\ell})$. Therefore we need $\frac{\ell + 1}{r}$ to be even for $r = 2, 3, 5$, so $\ell \equiv -1 \pmod{60}$.

Finally, suppose that $H \simeq S_4$. If $\ell \equiv \pm 3 \pmod{8}$, then the sign map on $H$ will be nontrivial, so it must be exactly the sign homomorphism on $S_4$. This means that $(\ell + 1)/4$ must be odd, but this forces the elements of order 2 to have the same sign, so this cannot occur.

If $\ell \equiv \pm 1 \pmod{8}$, then $H \subseteq \PSL_2(\F_{\ell})$. Therefore the sign of every element of $H$ will be 1, and we must have that $\frac{\ell + 1}{r}$ is even for $r = 2, 3, 4$. This gives the condition $\ell \equiv -1 \pmod{24}$.

\end{proof}

\subsection{Normalizer of a split Cartan}\label{sec:Nsp}

An element of the normalizer of a split Cartan has the property that it either fixes two lines in $\PP^1(\F_{\ell})$ or it swaps them. Recall from Proposition \ref{prop:cartan facts} that an element of the normalizer of a Cartan that is not in the Cartan itself will have order two in $\PGL_2(\F_{\ell})$. Applying Proposition \ref{prop:drew}, this means that such an element acts in one of the following two ways: If it is diagonalizable, then it fixes a pair of lines and swaps the remaining pairs, and if it is not diagonalizable, then it only swaps pairs of lines. This immediately shows that if $g$ is in the normalizer of a \emph{nonsplit} Cartan but not in the Cartan itself, then it is automatically in the normalizer of a split Cartan as well, so there are many elements of the normalizer of a nonsplit Cartan which will satisfy the local condition. 

The bulk of the following theorem is to understand when a group satisfying the local condition for the normalizer of a split Cartan is actually contained in the normalizer of a nonsplit Cartan instead. To avoid redundancy, we exclude the case where $E$ actually satisfies the local condition for the split Cartan.

\begin{thm}\label{thm:Nsp}
Let $K$ be a number field of degree $d$ and let $E/K$ be an elliptic curve. Let $G \subseteq \GL_2(\F_{\ell})$ denote the image of $\rhobar_{E,\ell}$ and let $H$ denote the image of $G$ in $\PGL_2(\F_{\ell})$. Suppose that $E$ satisfies the local condition for the normalizer of a split Cartan, but $E$ does not satisfy the local condition for the split Cartan. Then either $G$ is contained in the normalizer of a split Cartan or one of the following holds:
\begin{enumerate}
\item $G$ is contained in the normalizer of a nonsplit Cartan and $\ell \equiv 3 \pmod{4}$, with $\ell \le 6d  + 1$.
\item $H \simeq A_4$ and $\ell \equiv 7\pmod{12}$.
\item $H \simeq S_4$ and $\ell \equiv 13 \pmod{24}$.
\item $H \simeq A_5$ and $\ell \equiv 31 \pmod{60}$.  
\end{enumerate}
If $K \cap \Q(\mu_{\ell}) = \Q$, then only $(3)$ can occur, and if one of $(1)$, $(2)$, or $(4)$ holds, then $K$ contains $\Q(\sqrt{\ell^*})$.
\end{thm}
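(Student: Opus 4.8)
The plan is to reduce the whole statement to one fact about determinants. Since $\det\circ\rhobar_{E,\ell}$ is the mod $\ell$ cyclotomic character, Section~\ref{sec:cdt} gives that its image equals $\F_\ell^\times$ whenever $K\cap\Q(\mu_\ell)=\Q$ and is contained in $(\F_\ell^\times)^2$ exactly when $K\supseteq\Q(\sqrt{\ell^*})$. So it is enough to show that in each of cases $(1)$, $(2)$, $(4)$ one has $\det(G)\subseteq(\F_\ell^\times)^2$: for $\ell>2$ this is incompatible with surjectivity, so it rules those cases out when $K\cap\Q(\mu_\ell)=\Q$, and it forces $K\supseteq\Q(\sqrt{\ell^*})$. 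Cases $(2)$ and $(4)$ are immediate from the Remark after Proposition~\ref{prop:maxsubgroups}: a copy of $A_4$ or $A_5$ in $\PGL_2(\F_\ell)$ lies in $\PSL_2(\F_\ell)$, so every $g\in G$ differs by a scalar from a determinant-one lift of its image in $\PGL_2(\F_\ell)$ and hence has square determinant. For $(3)$, $\ell\equiv 13\pmod{24}$ forces $\ell\equiv -3\pmod 8$, and that same Remark records that the relevant copy of $S_4$ then admits a lift to $\GL_2(\F_\ell)$ with surjective determinant; so $(3)$ is genuinely consistent with $K\cap\Q(\mu_\ell)=\Q$, which is exactly why ``only $(3)$'' survives.

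The work is in case $(1)$: $G$ is contained in the normalizer $N$ of a nonsplit Cartan $C$, it lies in no normalizer of a split Cartan, and every element of $G$ lies in some normalizer of a split Cartan. First I would determine the image $H$ of $G$ in $\PGL_2(\F_\ell)$, a subgroup of the dihedral image of $N$. A non-scalar element of $C$ has irreducible characteristic polynomial and so is not $\F_\ell$-diagonalizable; to lie in a normalizer of a split Cartan it must then lie outside the Cartan part there, which forces it to have trace $0$ and image of order $2$ in $\PGL_2(\F_\ell)$. Hence the image of the cyclic group $G\cap C$ has order $\le 2$, and since elements of $N\setminus C$ have order $2$ in $\PGL_2(\F_\ell)$ by Proposition~\ref{prop:cartan facts}(3), it follows that $|H|\le 4$. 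If $|H|\le 2$ then $G=\langle Z,w\rangle$ for the subgroup $Z$ of scalars in $G$ and a single element $w$; picking a normalizer $N_0$ of a split Cartan with $w\in N_0$ and noting $Z\subseteq N_0$ gives $G\subseteq N_0$, contrary to hypothesis. So $H$ is a Klein four-group $\{1,\rho,s,\rho s\}$, with $\rho$ the unique order-two ``rotation'' and $s,\rho s$ the ``reflections''; fixing lifts $\tilde\rho\in C$ and $\tilde s\in N\setminus C$, both necessarily of trace $0$, we have $G=\langle Z,\tilde\rho,\tilde s\rangle$ and $\det\tilde\rho=-\delta u^2$ for some $u\ne 0$, where $\delta$ is a fixed non-residue.

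The decisive step is an anticommutation argument. Because $\rho$ and $s$ commute in $\PGL_2(\F_\ell)$, $\tilde\rho\tilde s=\lambda\tilde s\tilde\rho$ for a scalar $\lambda$ with $\lambda^2=1$; and $\lambda=1$ is impossible, since conjugation by $\tilde s$ fixes only the scalars of $C$ (because $\tilde s\notin C$) while $\tilde\rho$ is non-scalar. So $\tilde\rho$ anticommutes with both $\tilde s$ and $\tilde\rho\tilde s$. Now I claim $-\det\tilde s$ and $-\det(\tilde\rho\tilde s)$ are both non-residues. Indeed, if $-\det\tilde s$ were a square, the trace-zero invertible matrix $\tilde s$ would be $\F_\ell$-diagonalizable with two distinct eigenlines; any matrix anticommuting with $\tilde s$ must swap those lines, so $\tilde\rho$ --- and hence $\tilde s=\tilde\rho^{-1}(\tilde\rho\tilde s)$ --- would normalize the split Cartan that is diagonal in that eigenbasis, which together with $Z$ would place $G$ in a normalizer of a split Cartan, contradicting the hypothesis; the argument for $\tilde\rho\tilde s$ is identical. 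Dividing, $\det\tilde\rho=(-\det(\tilde\rho\tilde s))/(-\det\tilde s)$ is a square; but $\det\tilde\rho=-\delta u^2$ with $\delta$ a non-residue, so $-1$ must be a non-residue, i.e.\ $\ell\equiv 3\pmod 4$ --- recovering that clause of $(1)$ --- and then $\det\tilde s=(-1)(-\det\tilde s)$ is a square too. Since $\det$ is square-valued on $Z$ and on each of $\tilde\rho$ and $\tilde s$, we conclude $\det(G)\subseteq(\F_\ell^\times)^2$, which finishes case $(1)$.

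The hard part will be exactly the two moves in case $(1)$: showing the hypotheses force $H$ to be a Klein four-group, and then reading off the quadratic character of $-\det$ on the two reflections from the assumption that $G$ lies in no normalizer of a split Cartan, via the swap-of-eigenlines observation. The reduction to determinants and the treatment of cases $(2)$--$(4)$ are routine given the quoted results.
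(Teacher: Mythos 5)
Your case~(1) argument is correct and is a genuinely nicer route than the paper's. The paper fixes a basis so $G\subseteq\Nns$, writes $g=\left(\begin{smallmatrix}a&-\delta b\\ b&-a\end{smallmatrix}\right)$ and $h=y\left(\begin{smallmatrix}0&\delta\\ 1&0\end{smallmatrix}\right)$, and then asks directly whether $g$ and $h$ swap a common pair of lines, reducing to the discriminant of an explicit quadratic. Your version replaces this with the anticommutation relation $\tilde\rho\tilde s=-\tilde s\tilde\rho$ (forced because conjugation by $\tilde s\notin C$ acts as the nontrivial Galois automorphism on $C\simeq\F_{\ell^2}^\times$, fixing only the scalars) and the observation that a matrix anticommuting with a diagonalizable trace-zero matrix must swap its eigenlines. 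That reads off the quadratic characters of $-\det\tilde s$, $-\det(\tilde\rho\tilde s)$, hence of $-\delta$, without any coordinate computation, and it gives the $\ell\equiv 3\pmod 4$ clause and $\det(G)\subseteq(\F_\ell^\times)^2$ in one shot. The identification of $H$ as Klein four is also correct and corresponds to the paper's remark that the projective image is $\Z/2\Z\times\Z/2\Z$.

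However, the proposal is incomplete in two concrete places. First, you never derive the congruence conditions in cases~(2)--(4): $\ell\equiv 7\pmod{12}$, $\ell\equiv 13\pmod{24}$, $\ell\equiv 31\pmod{60}$. You describe these as ``routine given the quoted results,'' but the results you quote (the Remark after Proposition~\ref{prop:maxsubgroups} and Proposition~\ref{prop:bounds}) only control determinants; the congruences are extracted from Proposition~\ref{prop:drew} by a sign-and-orbit count. Concretely: for $H\simeq A_4$, elements of order $3$ must be diagonalizable (they cannot lie in $N\setminus C$), and the triviality of $\sigma$ forces $(\ell-1)/3$ even, while the hypothesis that $E$ fails the split-Cartan local condition forces the order-$2$ elements to be non-diagonalizable, giving the factor of $4$; similarly for $S_4$ and $A_5$. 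None of this appears in the proposal, and it is the entire content of clauses~(2)--(4). Your determinant argument also silently assumes these congruences are already established (e.g.\ ``$\ell\equiv 13\pmod{24}$ forces $\ell\equiv -3\pmod 8$''), so it cannot substitute for deriving them. Second, you omit the bound $\ell\le 6d+1$ in case~(1); this comes from the image-of-inertia input in Proposition~\ref{prop:inertia}/Proposition~\ref{prop:bounds} applied to the Klein four-group, and is not a determinant fact either.
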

\begin{proof}

Suppose that every $g \in G$ is contained in some nonsplit Cartan and at least one element of $G$ is not contained in any nonsplit Cartan. Then $g$ has order dividing $2(\ell^2 - 1)$, so $\ell \nmid |G|$. Thus $G$ is either contained in a Cartan, the normalizer of a Cartan, or is one of the exceptional subgroups.

If $G$ is contained in a Cartan, then $G$ is cyclic. Since its generator is by assumption contained in the normalizer of a split Cartan, so is $G$ and we are done. 

Now suppose that $G$ is contained in the normalizer of a Cartan but not in the Cartan itself. We will show that if $G$ is contained in the normalizer of a nonsplit Cartan, then $\ell \equiv 3\pmod{4}$ with $\ell \le 6d + 1$, and the determinant map has image contained in $\left(\F_{\ell}^{\times}\right)^2$. Without loss of generality, assume that $G \subseteq \Nns$.

As we saw in the proof of Theorem \ref{thm:Cns}, $G$ is generated by at most two elements. We have already ruled out the cyclic case, so it remains to show the result when $G$ has two generators, and as before, we can choose one generator to be in $\Nns \setminus \Cns$ and the other to be in $\Cns$. Recall that every $g \in \Nns \setminus \Cns$ satisfies the local condition, and that if the element of $\Cns$ is diagonalizable, then it must be scalar, in which case the image in $\PGL_2(\F_{\ell})$ is cyclic and we are done. 

Let $G = \langle g, h \rangle$, where $g \in \Nns \setminus \Cns$, and $h \in \Cns$ is not scalar. Then we can write $g = \left(\begin{smallmatrix} a & -\delta b \\ b & -a \end{smallmatrix} \right)$ and $h = y \left(\begin{smallmatrix} 0 & \delta \\ 1 & 0 \end{smallmatrix}\right)$ for some $y \neq 0$. Notice that $g$ is diagonalizable if and only if $-\det{g}$ is a square modulo $\ell$.

Suppose first that $g$ is diagonalizable. Then it is easy to check that for any $L \in \PP^1(\F_{\ell})$, $g$ fixes $L$ if and only if it fixes $hL$. Therefore the pair of lines fixed by $g$ is swapped by $h$, so $g$ and $h$, belong to normalizer of the same split Cartan, so $G$ is contained in the normalizer of a split Cartan.

Now suppose that $g$ is not diagonalizable. If $a = 0$, then $g$ and $h$ both swap the two axes, and so $G$ is contained in the normalizer of a split Cartan. However, when $a = 0$, $\det g = \delta b^2$, which is not a square. Since we are assuming that $g$ is not diagonalizable, i.e.~ that $-\det g$ is not a square, we conclude that $\ell \equiv 1\pmod{4}$ in this case.

For $a \neq 0$ we proceed as follows. We have that $g$ and $h$ swap the same pair of lines if and only if $gL = hL$ for some $L$, or $h^{-1}gL = L$. If we let $L = [1 \colon m]$, then this will occur if and only if
$$a\delta m^2 - 2b\delta m + a = 0.$$
This polynomial has discriminant $4 \delta \det{g}$, which is a square if and only if $\ell \equiv 1\pmod{4}$, since $g$ is not diagonalizable. We conclude that $G$ is contained in the normalizer of a split Cartan if and only if $\ell \equiv 1\pmod{4}$. Now let us examine what happens when $\ell \equiv 3\pmod{4}$.

We know that, up to scaling the first generator, $G$ is conjugate to a group of the form
\begin{equation*}
\left\langle \begin{pmatrix} 0 & \delta \\ 1 & 0 \end{pmatrix} , \begin{pmatrix} a & -\delta b \\ b & -a \end{pmatrix}\right\rangle,
\end{equation*}
where neither $\delta$ nor $a^2 - \delta b^2 = -\det \left(\begin{smallmatrix} a & - \delta b \\ b & - a\end{smallmatrix}\right)$ is a square in $\F_{\ell}$. Thus every element of this group has determinant a square. Moreover, the image of this subgroup in $\PGL_2(\F_{\ell})$ is isomorphic to $\Z/2\Z \times \Z/2\Z$. Using our knowledge of the image of inertia as in the case of the exceptional subgroups, we conclude that in this case $\ell \le 6 d + 1$.

All that remains is to examine what happens when $G$ is one of the exceptional subgroups. First suppose that $H \simeq A_4$. Then $H$ has nontrivial elements of order 2 and 3. Since every element of $\Nsp \setminus \Csp$ has order 2 in $\PGL_2(\F_{\ell})$, the elements of order 3 must belong to a split Cartan, and therefore fix a pair of lines. Since the sign map is necessarily the trivial map, we need $\frac{\ell - 1}{3}$ to be even. Therefore $\ell \equiv 1\pmod{6}$. The elements of order 2 may entirely belong to either a split Cartan or its complement in the normalizer, but since we are assuming that $G$ contains at least one element which is not in any split Cartan, the elements of order 2 are forced to be nondiagonalizable. Therefore $\ell \equiv 7 \pmod{12}$.

Similarly, if $H \simeq A_5$, the elements of orders 3 and 5 must belong to the split Cartan, whereas the elements of order 2 may belong to either. Excluding the case where every element is contained in a split Cartan, this produces the condition $\ell \equiv 31 \pmod{60}$, where again, we must assume that the elements of order 2 are not diagonalizable. 

Finally, suppose that $H \simeq S_4$. Then $H$ has nontrivial elements of order 2, 3, and 4. Again, the elements of orders 3 and 4 are necessarily diagonalizable. If $\ell \equiv \pm 3\pmod{8}$, then the sign map on $H$ is nontrivial and must correspond to the sign homomorphism on $S_4$. This leads to the condition that $\ell \equiv 13\pmod{24}$. Notice that in this case, the elements of order 2 which are even are exactly the elements of order 2 that belong to split Cartan, and the elements of order 2 which are odd are exactly the ones that belong to its complement in the normalizer.

If $\ell \equiv \pm 1\pmod{8}$, then the sign map on $H$ is trivial, and this forces $\ell \equiv 1\pmod{24}$. In this case, however, every element of $H$ is diagonalizable, and so every element of $G$ is actually contained in a split Cartan. 
\end{proof}

\begin{cor} Let $E / \Q$ be an elliptic curve. Then $E$ satisfies the local-global principle for the normalizer of a split Cartan for all $\ell \ne 13$.
\end{cor}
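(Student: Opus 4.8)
The plan is to deduce this corollary purely by assembling Corollary~\ref{corr:Csp} and Theorem~\ref{thm:Nsp}, exploiting that $K=\Q$ satisfies $\Q\cap\Q(\mu_{\ell})=\Q$ for every prime $\ell$, so that all of the alternatives in those results which force $K$ to contain a quadratic subfield are vacuous over $\Q$. So suppose $E/\Q$ satisfies the local condition for the normalizer of a split Cartan, write $G=\im(\rhobar_{E,\ell})$ and let $H$ be its image in $\PGL_2(\F_{\ell})$; I must show that $G$ lies in a conjugate of $\Nsp$ as soon as $\ell\neq 13$ (the case $\ell=2$ being already covered by the remark in Section~\ref{sec:cdt}).

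First I would split according to whether $E$ also satisfies the local condition for the split Cartan $\Csp$. If it does, Corollary~\ref{corr:Csp} applies: over $\Q$ the alternatives $(2)$--$(4)$ there (the cases $H\simeq A_4,S_4,A_5$) are excluded since $\Q\cap\Q(\mu_{\ell})=\Q$, so $G$ is contained either in a split Cartan or in the normalizer of one; either way $G\subseteq\Nsp$ up to conjugacy, and the local--global principle holds for all $\ell$. If $E$ does \emph{not} satisfy the local condition for $\Csp$, then I would instead invoke Theorem~\ref{thm:Nsp} with $d=1$: because $\Q\cap\Q(\mu_{\ell})=\Q$, among alternatives $(1)$--$(4)$ only $(3)$ can occur, so either $G$ is already contained in a conjugate of $\Nsp$ --- and we are done --- or $H\simeq S_4$ and $\ell\equiv 13\pmod{24}$.

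It then remains to eliminate this last alternative whenever $\ell\neq 13$. Since $H\simeq S_4$ the image of $\rhobar_{E,\ell}$ is exceptional, so Proposition~\ref{prop:bounds}(2) with $d=1$ gives $\ell\le 13$; combined with $\ell\equiv 13\pmod{24}$ this forces $\ell=13$. Hence for every prime $\ell\neq 13$ the group $G$ is contained in a conjugate of $\Nsp$, which is exactly the statement that $E$ satisfies the local--global principle for the normalizer of a split Cartan.

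I do not expect a genuine obstacle here: this is a bookkeeping corollary built from the two preceding results. The only points requiring care are (a) verifying that over $\Q$ every branch demanding $K\supseteq\Q(\sqrt{\ell^{*}})$ is genuinely impossible, so that the $S_4$ branch with $\ell\equiv 13\pmod{24}$ is the sole survivor of Theorem~\ref{thm:Nsp}, and (b) confirming that the inertia bound $\ell\le 13$ for an $S_4$-image over $\Q$ is sharp enough to isolate precisely the single prime $\ell=13$ (consistently with the main theorem). The small primes cause no trouble: $\ell=2$ is handled by the remark, and $\ell=3,5\not\equiv 13\pmod{24}$ so the $S_4$ branch never arises for them.
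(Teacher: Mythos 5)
Your proposal is correct and follows the argument the paper leaves implicit (no proof is written out for this corollary). You split according to whether the stronger local condition for $\Csp$ holds, invoke Corollary~\ref{corr:Csp} in the first case and Theorem~\ref{thm:Nsp} in the second, and then over $\Q$ the only surviving alternative is $H\simeq S_4$ with $\ell\equiv 13\pmod{24}$, which the inertia bound $\ell\le 12d+1=13$ from Proposition~\ref{prop:bounds}(2) pins to $\ell=13$; your remarks about $\ell=2$ and the other small primes are also accurate.
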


In fact, in Section \ref{sec:modcurves} we will see that there are at least three counterexamples in the case of $\ell = 13$.

\subsection{Normalizer of a nonsplit Cartan}\label{sec:Nns}

If the image of $\rhobar_{E,\ell}$ is locally in the normalizer of a nonsplit Cartan, then every element of the image fixes or swaps a pair of conjugate lines of the form $[1 \colon \alpha] \in \PP^1(\F_{\ell^2})$.

We will begin by classifying the groups $G$ with the property that for all $g \in G$, $g$ is in the normalizer of a nonsplit Cartan, yet $G$ is contained in the normalizer of a \emph{split} Cartan. For simplicity, we will fix our basis so that $G \subseteq \Nsp$, i.e.~ $G$ consists only of diagonal and antidiagonal matrices. First we have the following lemma.

\begin{lem}\label{lem:NnsinNsp}
Let $G \subseteq \Nsp$ and suppose that every $g \in G$ is in the normalizer of some nonsplit Cartan. Then there is a nonsplit Cartan subgroup whose normalizer contains $G$.
\end{lem}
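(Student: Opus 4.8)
The plan is to work directly with the matrix description of membership in a nonsplit Cartan, as laid out in Section \ref{sec:subgroups}. Since $G \subseteq \Nsp$, every $g \in G$ is either diagonal, say $\left(\begin{smallmatrix} a & 0 \\ 0 & d\end{smallmatrix}\right)$, or antidiagonal, say $\left(\begin{smallmatrix} 0 & b \\ c & 0\end{smallmatrix}\right)$. Recalling the criterion: a matrix $\left(\begin{smallmatrix} a & b \\ c & d\end{smallmatrix}\right)$ lies in (the Cartan itself, for the basis $\{1,\alpha\}$) when $b \ne 0$, $\Tr(\alpha) = (d-a)/b$, $\Norm(\alpha) = -c/b$, and it lies in the normalizer but not the Cartan when $\Tr(\alpha) = 0$, $d = -a$, and $a\Tr(\alpha) + b\Norm(\alpha) - c = 0$, i.e. $b\Norm(\alpha) = c$. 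The first step is to translate the hypothesis "$g$ is in the normalizer of some nonsplit Cartan'' into an arithmetic condition on the entries of each $g$, separately for the diagonal and antidiagonal cases, and to observe which element $\alpha \in \F_{\ell^2}\setminus\F_{\ell}$ (equivalently, which pair $(\Tr\alpha, \Norm\alpha)$ defining a nonsplit Cartan) is permitted for that $g$.

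The key step is then to exhibit a single nonsplit Cartan — that is, a single choice of $\alpha$ with $\alpha \notin \F_\ell$ — that works simultaneously for every element of $G$. I expect the correct choice to be a basis $\{1,\alpha\}$ with $\Tr(\alpha) = 0$ and $\Norm(\alpha)$ equal to a fixed non-square, since a trace-zero $\alpha$ is exactly what is forced by the antidiagonal (normalizer-minus-Cartan) elements, and the scalar diagonal matrices trivially lie in every Cartan. Concretely: a diagonal matrix $\left(\begin{smallmatrix} a & 0 \\ 0 & d\end{smallmatrix}\right)$ lies in a nonsplit Cartan only if it is scalar (as noted in the proof of Theorem \ref{thm:Cns}, the only diagonalizable elements of a nonsplit Cartan are the scalars), so every diagonal element of $G$ is scalar and poses no constraint. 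An antidiagonal matrix $\left(\begin{smallmatrix} 0 & b \\ c & 0\end{smallmatrix}\right)$, having trace $0$, lies in the normalizer of the nonsplit Cartan attached to $\{1,\alpha\}$ with $\Tr(\alpha)=0$ precisely when $b\Norm(\alpha) = c$, i.e. $\Norm(\alpha) = c/b$. So I must show that $c/b$ takes the same value $-\Norm(\alpha)$, a non-square, for all antidiagonal elements of $G$: given two antidiagonal elements $\left(\begin{smallmatrix} 0 & b \\ c & 0\end{smallmatrix}\right)$ and $\left(\begin{smallmatrix} 0 & b' \\ c' & 0\end{smallmatrix}\right)$ of $G$, their product $\left(\begin{smallmatrix} bc' & 0 \\ 0 & cb' \end{smallmatrix}\right)$ is a diagonal element of $G$, hence scalar, forcing $bc' = cb'$, i.e. $c/b = c'/b'$. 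Moreover $c/b$ itself must be a non-square: the square $\left(\begin{smallmatrix} bc & 0 \\ 0 & bc\end{smallmatrix}\right)$ of an antidiagonal element is scalar (automatic), but the point is that the antidiagonal element has irreducible characteristic polynomial $x^2 - bc$ — it cannot be diagonalizable over $\F_\ell$ unless $bc$ is a square, and if it were diagonalizable it would not lie in any nonsplit Cartan unless scalar, which an antidiagonal matrix is not; hence $bc$, and so $c/b = bc/b^2$, is a non-square. Setting $\Norm(\alpha) = -c/b$ (a non-square when $\ell \equiv 1 \pmod 4$ — one should double-check the sign bookkeeping against the conventions, adjusting to $\Norm(\alpha) = c/b$ if needed so that $\alpha \notin \F_\ell$) and $\Tr(\alpha) = 0$ gives a genuine nonsplit Cartan whose normalizer contains every diagonal (scalar) and every antidiagonal element of $G$, hence contains $G$.

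The main obstacle, and the only place requiring care, is the sign/normalization bookkeeping: ensuring that the $\alpha$ produced actually generates the quadratic extension (i.e. that the relevant quantity $c/b$ really is a non-residue, so $x^2 - c/b$ or $x^2 + c/b$ is irreducible), and reconciling this with the $+$ versus $-$ sign appearing in the Cartan-membership conditions of Section \ref{sec:subgroups}. This is exactly where the hypothesis that \emph{every} element of $G$ — including products of antidiagonal pairs — lies in the normalizer of a nonsplit Cartan does its work, via the "product of two antidiagonals is scalar'' identity. Once the constancy and non-residuosity of $c/b$ across the antidiagonal part of $G$ is established, the lemma is immediate.
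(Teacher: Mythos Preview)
There is a genuine gap. Your claim that ``every diagonal element of $G$ is scalar'' is false: the hypothesis is that each $g\in G$ lies in the \emph{normalizer} of some nonsplit Cartan, not in a nonsplit Cartan itself. A diagonal matrix $\left(\begin{smallmatrix} a & 0 \\ 0 & d\end{smallmatrix}\right)$ lies in the normalizer of a nonsplit Cartan either when it is scalar (the Cartan part) \emph{or} when it has trace zero, i.e.\ is a scalar multiple of $\left(\begin{smallmatrix} 1 & 0 \\ 0 & -1\end{smallmatrix}\right)$ (the normalizer-minus-Cartan part, by Proposition~\ref{prop:cartan facts}(5)). The paper's proof explicitly records this second type.

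This omission breaks the next step. When you multiply two antidiagonal elements $\left(\begin{smallmatrix} 0 & b \\ c & 0\end{smallmatrix}\right)$ and $\left(\begin{smallmatrix} 0 & b' \\ c' & 0\end{smallmatrix}\right)$ to get the diagonal matrix $\left(\begin{smallmatrix} bc' & 0 \\ 0 & cb'\end{smallmatrix}\right)$, the hypothesis forces this product to be scalar \emph{or} of the form $\left(\begin{smallmatrix} x & 0 \\ 0 & -x\end{smallmatrix}\right)$; hence $c/b = \pm\, c'/b'$, not necessarily $c/b = c'/b'$. So the ratio $c/b$ is constant on the antidiagonal part of $G$ only up to sign, and your choice of $\alpha$ with $\Tr(\alpha)=0$ and a single value of $\Norm(\alpha)$ does not automatically cover all antidiagonal elements. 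The paper's proof addresses exactly this by allowing $a=\pm b$ (in its notation) and arguing that one of the two antidiagonal elements lands in the Cartan part and the other in the normalizer-minus-Cartan part of the \emph{same} nonsplit Cartan. Your sign-bookkeeping caveat in the last paragraph is therefore not merely cosmetic: it is where the missing case hides, and it cannot be resolved by adjusting a sign convention alone --- you must separately treat the situation where both ratios $r$ and $-r$ occur.
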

\begin{proof}
Up to scalar multiplication, there are only two types of matrices in $\Nsp$ that satisfy the assumption: $\left(\begin{smallmatrix} 1 & 0 \\ 0 & -1\end{smallmatrix}\right)$, which is in the normalizer of a nonsplit Cartan (but not in the Cartan itself) corresponding to any basis $\{1, \alpha\}$ where $\alpha$ has trace 0, and matrices of the form $\left(\begin{smallmatrix} 0 & 1 \\ a & 0\end{smallmatrix}\right)$, which are in the normalizer of a nonsplit Cartan corresponding to any basis $\{1, \alpha\}$ where $\Norm(\alpha) = a$, and in a nonsplit Cartan corresponding to any basis $\{1, \alpha\}$ where $\Tr(\alpha) = 0$ and $\Norm(\alpha) = -a$. 

If $G$ contains two matrices  $\left(\begin{smallmatrix} 0 & 1 \\ a & 0\end{smallmatrix}\right)$ and $\left(\begin{smallmatrix} 0 & 1 \\ b & 0\end{smallmatrix} \right)$, then the only way for their product to be of one of the two allowable forms is if $a = \pm b$, in which case there is a nonsplit Cartan whose normalizer contains both matrices, as well as the matrix $\left(\begin{smallmatrix} 1 & 0 \\ 0 & -1\end{smallmatrix}\right)$. 
\end{proof}

\begin{thm}\label{thm:Nns}
Let $K$ be a number field and let $E/K$ be an elliptic curve. Let $G \subseteq \GL_2(\F_{\ell})$ denote the image of $\rhobar_{E,\ell}$ and let $H$ denote the image of $G$ in $\PGL_2(\F_{\ell})$. Suppose that $E$ satisfies the local condition for the normalizer of a nonsplit Cartan but $E$ does not satisfy the local condition for the nonsplit Cartan. Then either $G$ is contained in the normalizer of a nonsplit Cartan or one of the following holds:
\begin{enumerate}
\item $H \simeq A_4$ and $\ell \equiv 5 \pmod{12}$.
\item $H \simeq S_4$ and $\ell \equiv 11 \pmod{24}$.
\item $H \simeq A_5$ and $\ell \equiv 29 \pmod{60}$.
\end{enumerate}
If $K \cap \Q(\mu_{\ell}) = \Q$, then $(1)$ and $(3)$ cannot occur, and if $(1)$ or $(3)$ holds, then $K$ contains $\Q(\sqrt{\ell^*})$.
\end{thm}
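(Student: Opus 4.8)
The plan is to run the same machine as in Theorems~\ref{thm:Cns} and~\ref{thm:Nsp}, but now with the roles of "split" and "nonsplit" interchanged, and exploiting Lemma~\ref{lem:NnsinNsp} to dispose of the sub-case where $G$ sits inside the normalizer of a split Cartan. First I would observe that the local hypothesis forces every $g\in G$ to have order dividing $2(\ell^2-1)$, so $\ell\nmid|G|$ and Proposition~\ref{prop:maxsubgroups} applies: $G$ is contained in a Cartan, in the normalizer of a Cartan, or $H$ is one of $A_4$, $S_4$, $A_5$. The Cartan case is immediate: $H$ is then cyclic, so $G$ is determined by the action of a single element, which by assumption normalizes some nonsplit Cartan, hence so does $G$.

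Next I would handle the case $G\subseteq N$ for $N$ the normalizer of a Cartan but $G\not\subseteq C$. If $C$ is nonsplit we are done by hypothesis, so assume $C$ is split; after conjugating, $G\subseteq\Nsp$, i.e.\ $G$ consists of diagonal and antidiagonal matrices. This is exactly the setting of Lemma~\ref{lem:NnsinNsp}, which then produces a nonsplit Cartan whose normalizer contains $G$ — so this case never yields a genuine counterexample. (This is the key structural simplification that makes the nonsplit-normalizer case cleaner than Theorem~\ref{thm:Nsp}: there is no analogue of item~(1) of that theorem.) It remains only to determine, for each exceptional $H$, the congruence conditions on $\ell$ under which $H$ can occur as the projective image while every element still lies in the normalizer of a nonsplit Cartan but not every element lies in a nonsplit Cartan.

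For the exceptional cases I would argue as in the proofs above via Proposition~\ref{prop:drew}. An element of $H$ of order $r>1$ either lies in a nonsplit Cartan — in which case it fixes no line of $\PP^1(\F_\ell)$ and by Proposition~\ref{prop:drew} its orbits on $\PP^1(\F_\ell)$ have size $r$ with $s=(\ell+1)/r$, so $\sigma(h)=(-1)^{(\ell+1)/r}$ — or it lies in the normalizer but not the Cartan, in which case it has projective order $2$ and fixes at most $2$ lines. Since we are told not every element lies in a nonsplit Cartan, the order-$2$ elements must be the "swap only" type (fixing $0$ lines), forcing $(\ell+1)/2$ even when that parity is constrained by $\sigma$. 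Working out the sign constraints: for $H\simeq A_4$ and $A_5$ the sign map is trivial, so $(\ell+1)/r$ must be even for every order $r$ of a nontrivial element ($r=2,3$ for $A_4$; $r=2,3,5$ for $A_5$), giving $\ell\equiv-1\pmod{12}$ and $\ell\equiv-1\pmod{60}$ respectively — but I must then intersect with the condition that we are \emph{not} in the "$E$ satisfies the local condition for the nonsplit Cartan" case, which is exactly what upgrades $\ell\equiv-1$ to $\ell\equiv 5\pmod{12}$ and $\ell\equiv 29\pmod{60}$ (the order-$2$ elements forced outside every nonsplit Cartan). For $H\simeq S_4$ I would split on $\ell\bmod 8$: when $\ell\equiv\pm1\pmod 8$ the sign map is trivial, forcing all of $r=2,3,4$ to give even $(\ell+1)/r$, but then even the order-$2$ elements are in a nonsplit Cartan, contradicting our hypothesis; when $\ell\equiv\pm3\pmod 8$ the sign map is the genuine sign of $S_4$, and matching $\sigma(h)=(-1)^{(\ell+1)/r}$ against $\operatorname{sgn}$ (order $3$ even, order $4$ odd) pins down $\ell\equiv 11\pmod{24}$. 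Finally the determinant remarks: the $A_4,A_5$ lifts have determinant in $(\F_\ell^\times)^2$, which is incompatible with $K\cap\Q(\mu_\ell)=\Q$ (surjective determinant) unless $K\supseteq\Q(\sqrt{\ell^*})$, ruling out $(1)$ and $(3)$ over such $K$; for $S_4$ when $\ell\equiv\pm3\pmod 8$ there is a lift with surjective determinant, so $(2)$ survives. The main obstacle is bookkeeping: keeping straight, for each exceptional group and each residue of $\ell$, which elements are forced into the Cartan versus its complement, and correctly extracting the "not in a nonsplit Cartan" refinement — but this is the same argument already executed twice above, so no new idea is needed.
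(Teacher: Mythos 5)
Your overall architecture matches the paper's proof exactly: observe $\ell \nmid |G|$, apply Proposition~\ref{prop:maxsubgroups}, dispatch the Cartan case via cyclicity of $H$, dispatch the normalizer-of-split case via Lemma~\ref{lem:NnsinNsp}, and finish the exceptional cases with Proposition~\ref{prop:drew} plus the determinant remark. The $S_4$ analysis and the final sentence about $\Q(\sqrt{\ell^*})$ are both correct and correctly reasoned.

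However, there is a genuine sign error in your treatment of the order-$2$ elements for $A_4$ and $A_5$, and the ``upgrade'' you invoke to repair it does not actually work as described. You write that since not every element lies in a nonsplit Cartan, the order-$2$ elements must be the ``swap only'' type fixing $0$ lines of $\PP^1(\F_\ell)$, forcing $(\ell+1)/2$ even. But an element of the normalizer of a nonsplit Cartan that fixes $0$ lines of $\PP^1(\F_\ell)$ has no eigenvalue in $\F_\ell$, hence has irreducible characteristic polynomial, hence by Proposition~\ref{prop:cartan facts}(2) \emph{is} in a nonsplit Cartan. The elements that are in $\Nns\setminus\Cns$ but not in \emph{any} nonsplit Cartan are precisely the trace-zero elements with reducible characteristic polynomial, i.e.\ the diagonalizable ones, and these fix \emph{two} lines of $\PP^1(\F_\ell)$ and swap the remaining $\ell - 1$, giving $s = (\ell+3)/2$. (The split/nonsplit situations are mirror images: in Theorem~\ref{thm:Nsp} the elements forced out of every split Cartan are the nondiagonalizable ones; here the elements forced out of every nonsplit Cartan are the diagonalizable ones.) The condition from $\sigma = 1$ is therefore $(\ell+3)/2$ even, i.e.\ $\ell \equiv 1 \pmod 4$, not $(\ell+1)/2$ even. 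Combined with $\ell \equiv 5 \pmod 6$ from the order-$3$ elements this gives $\ell \equiv 5 \pmod{12}$; your route of first deriving $\ell\equiv -1\pmod{12}$ and then ``intersecting'' cannot produce this, since $5$ and $-1$ are incompatible mod $12$. The same correction applies to $A_5$: $(\ell+3)/2$ even, $(\ell+1)/3$ even, $(\ell+1)/5$ even jointly give $\ell\equiv 29\pmod{60}$. You do land on the right congruences, but the mechanism you describe for obtaining them is incorrect and should be replaced by the ``diagonalizable $\Rightarrow$ fixes two lines'' analysis.
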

\begin{proof}
As in the proof of Theorem \ref{thm:Nsp}, we go through the possibilities for $H$ as enumerated in Proposition \ref{prop:maxsubgroups}. Suppose that $E$ satisfies the local condition for the normalizer of a nonsplit Cartan, i.e.~every $g \in G$ is in the normalizer of some nonsplit Cartan. Then the order of $G$ is prime to $\ell$ since the order of $\Nns$ is $2(\ell^2 - 1)$. Therefore $G$ is either contained in a Cartan subgroup, the normalizer of a Cartan subgroup, or is one of the exceptional subgroups. If $G$ is contained in a Cartan subgroup, then $H$ is cyclic, and so every element of $G$ is in fact in the normalizer of the same nonsplit Cartan subgroup and the global condition is satisfied. Furthermore, if $G$ is contained in the normalizer of a split Cartan, then Lemma \ref{lem:NnsinNsp} shows that $G$ is also contained in the normalizer of a nonsplit Cartan, thereby satisfying the global condition. 

Finally, we examine the exceptional subgroups. Observe that the only diagonalizable elements of the normalizer of a nonsplit Cartan are those which arise in the normalizer, rather than coming from the Cartan subgroup itself. These all have order 2 in $\PGL_2(\F_{\ell})$, so the elements of order 3, 4, and 5 which occur in the various exceptional subgroups must all be nondiagonalizable. The calculation proceeds as in the the proof of Theorem \ref{thm:Nsp}, and we throw out the cases where every element is actually contained in a nonsplit Cartan.
\end{proof}

\begin{cor}\label{cor:Nns over Q}
Let $E/\Q$ be an elliptic curve. Then $E$ satisfies the local-global principle for the normalizer of a nonsplit Cartan.
\end{cor}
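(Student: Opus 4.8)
The plan is to feed the local condition into Theorem~\ref{thm:Nns} and then to eliminate the exceptional cases that survive over $\Q$. By the remark in Section~\ref{sec:cdt} we may assume $\ell > 2$. First I would record that, for $\ell > 2$, an elliptic curve $E/\Q$ can \emph{never} satisfy the local condition for the nonsplit Cartan itself. Indeed, as observed just before Theorem~\ref{thm:Cns}, complex conjugation $c \in \Gal(\Qbar/\Q)$ has $\rhobar_{E,\ell}(c)$ acting with eigenvalues $1$ and $-1$, and no element of a nonsplit Cartan has those eigenvalues, so $\rhobar_{E,\ell}(c) \in \im(\rhobar_{E,\ell})$ lies in no conjugate of a nonsplit Cartan; the ``complex conjugation acts trivially'' alternative mentioned there is ruled out for $\ell > 2$ because $\det\rhobar_{E,\ell}(c)$ is the value at $c$ of the mod $\ell$ cyclotomic character, namely $-1 \ne 1$. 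Since the Chebotarev argument of Section~\ref{sec:cdt} forces every element of $\im(\rhobar_{E,\ell})$ into a conjugate of the nonsplit Cartan whenever $E$ satisfies that local condition, it follows that an $E/\Q$ satisfying the local condition for the normalizer of a nonsplit Cartan automatically satisfies the hypotheses of Theorem~\ref{thm:Nns}.

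Next I would apply Theorem~\ref{thm:Nns} with $K = \Q$. Because $\Q \cap \Q(\mu_\ell) = \Q$, its closing sentence excludes cases~(1) and~(3), so either $G := \im(\rhobar_{E,\ell})$ is contained in the normalizer of a nonsplit Cartan---which is the desired conclusion---or we land in case~(2): $H \simeq S_4$ and $\ell \equiv 11 \pmod{24}$, where $H$ denotes the projective image. In that case the mod $\ell$ image is exceptional, so the corollary to Proposition~\ref{prop:bounds} gives $\ell \le 13$, and the only prime $\le 13$ congruent to $11$ modulo $24$ is $\ell = 11$. Thus the corollary reduces to showing that no elliptic curve over $\Q$ has projective mod $11$ image equal to $S_4$.

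The hard part will be exactly this last reduction, and it cannot be dispatched by the kind of elementary arguments used elsewhere. It is not settled by a determinant obstruction of the sort that removes $A_4$ and $A_5$: since $11 \equiv 3 \pmod 8$, there is a lift of $S_4$ to $\GL_2(\F_{11})$ with surjective determinant (cf.\ the Remark after Proposition~\ref{prop:maxsubgroups}). Nor is it merely a group-theoretic gap: if $H \simeq S_4 \subseteq \PGL_2(\F_{11})$, then using that $3, 4 \mid \ell + 1 = 12$ forces every element of $\PGL_2(\F_{11})$ of order $3$ or $4$ into the image of a nonsplit Cartan, and that a non-scalar lift of an order-two element of $\PGL_2(\F_{11})$ has either trace $0$ or irreducible characteristic polynomial, one checks that every $g \in G$ is scalar, has trace $0$, or has irreducible characteristic polynomial; by the discussion in Section~\ref{sec:Nns} this means $E$ automatically satisfies the local condition for the normalizer of a nonsplit Cartan even though $G$ is not contained in it ($S_4$ being neither cyclic nor dihedral). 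So a curve in case~(2) really would be a counterexample, and ruling it out is a statement about the modular curve $X_{S_4}(11)$: its only rational points are cusps and points with complex multiplication (as in Banwait--Cremona's study \cite{cremona-banwait2013} of the curves $X_{S_4}(p)$), cusps do not correspond to elliptic curves, and a CM elliptic curve over $\Q$ never has projective mod $\ell$ image $S_4$, since its image lies in the normalizer of a Cartan (or, for the finitely many bad $\ell$, in a Borel). With case~(2) thereby empty, $G$ lies in the normalizer of a nonsplit Cartan for every $\ell$, which is the asserted local--global principle.
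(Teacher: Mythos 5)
Your proof follows the paper's argument essentially step for step: feed the local condition into Theorem~\ref{thm:Nns}, use $\Q \cap \Q(\mu_\ell) = \Q$ to kill cases (1) and (3), bound $\ell \le 13$ via the corollary to Proposition~\ref{prop:bounds} to land on $\ell = 11$ and $H \simeq S_4$, and then appeal to the arithmetic of $X_{S_4}(11)$. The only (harmless) deviation is at the end: the paper cites Ligozat's result that $X_{S_4}(11)(\Q)$ consists of a single cusp, whereas you allow for possible CM points and then argue they cannot have projective image $S_4$; both settle the same modular-curve question, and your added preliminary check that the ``not the nonsplit-Cartan local condition'' hypothesis of Theorem~\ref{thm:Nns} is automatic over $\Q$ is a correct and worthwhile explicit remark that the paper leaves implicit.
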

\begin{proof}
By the previous theorem, the only way in which the local-global principle could fail to hold is if there exists an elliptic curve over $\Q$ whose mod 11 image of Galois is contained in the exceptional subgroup corresponding to $S_4$. It has been shown in \cite[II.4.4]{ligozat1976} that there is no such elliptic curve. 
\end{proof}

\section{Modular Curves and Specific Counterexamples}\label{sec:modcurves}

Given an integer $N$ and a subgroup $H \subseteq \GL_2(\Z/N\Z)$, there exists an algebraic curve $Y_{H}(N)$ and a map $j \colon Y_H(N) \to \A^1$ with the following property: If $P \in Y_H(N)(K)$, then there exists an elliptic curve $E/K$ with image of Galois contained in a subgroup conjugate to $H$ and $j(E) = j(P)$, where $j(E)$ is the $j$-invariant of $E$. Conversely, if $E/K$ is an elliptic curve whose image of Galois is contained in a subgroup conjugate to $H$, then there exists a point $P \in Y_H(N)(K)$ with $j(E) = j(P)$. There is a smooth compactification $X_{H}(N)$ of $Y_{H}(N)$, and we call $X_H(N)$ the \emph{modular curve of level $N$ associated to $H$}. The $K$-rational points of $X_H(N)$ coming from the compactification are called cusps and correspond to generalized elliptic curves in the sense of Deligne and Rapoport (see \cite{deligne-rapoport1973}). We are interested in studying the noncuspidal points in $X_H(N)(K)$, in particular in the case when $N = \ell$ is prime.

Theorem \ref{thm:Nsp} tells us that for $\ell = 13$, if there exists a counterexample over $\Q$ to the local-global principle for the normalizer of a split Cartan, then the image of the mod 13 Galois representation for that elliptic curve is contained in a subgroup $H_{S_4} \subseteq \GL_2(\F_{13})$ with image in $\PGL_2(\F_{13})$ isomorphic to $S_4$. To find out if such a curve exists, we consider the rational points of $X_{S_4}(13)$, which is the modular curve associated to $H_{S_4}$. It is worth nothing that a rational point on $X_{S_4}(\ell)$ does not necessarily satisfy the local condition for the normalizer of a split Cartan, but our congruence conditions on $\ell$ guarantee that this is the case, so it will hold for $\ell = 13$. Furthermore, a rational point on $X_{S_4}(\ell)$ could correspond to an elliptic curve whose image of Galois is strictly contained in $H_{S_4}$, so care must be taken to make sure that the image is the entire group.

This calculation was done by Banwait and Cremona in \cite[Corollary 1.5]{cremona-banwait2013}. They also found quadratic points on this curve, in which case the image of the Galois representation may be contained in $A_4$, by Theorem \ref{thm:Nsp}. The conclusions of their calculations are laid out in the following table.\bigskip

\begin{center}
\begin{tabular}{|c|c|l|l|}
\hline \multicolumn{4}{|c|}{} \\ [-3ex]
\multicolumn{4}{|c|}{\textbf{Confirmed Counterexamples to Local-Global for the Normalizer of the split Cartan}} \\
\hline & & &  \\ [-3ex]
$\ell$ & $K$ & $j$-invariant & $H$ \\
\hline & & &  \\ [-3ex]
$13$ & $\Q$ & $\displaystyle \frac{2^4 \cdot 5 \cdot 13^4 \cdot 17^3}{3^{13}}$ & $S_4$ \\
[1.5ex] \hline & & &  \\ [-3ex]
$13$ & $\Q$ & $\displaystyle -\frac{2^{12} \cdot 5^3 \cdot 11 \cdot 13^4}{3^{13}}$ & $S_4$ \\
[1.5ex] \hline & & &  \\ [-3ex]
$13$ & $\Q$ & $\displaystyle \frac{2^{18} \cdot 3^3 \cdot 13^4 \cdot 127^3 \cdot 139^3 \cdot 157^3 \cdot 283^3 \cdot 929}{5^{13} \cdot 61^{31}}$ & $S_4$ \\
[1.5ex] \hline & & &  \\ [-3ex]
$13$ & $\Q\left(\sqrt{13}\right)$ &  $\displaystyle \frac{2^4 \cdot 5 \cdot 13^4 \cdot 17^3}{3^{13}}$ & $A_4$ \\
[1.5ex] \hline & & &  \\ [-3ex]
$13$ & $\Q\left(\sqrt{13}\right)$ &  $\displaystyle -\frac{2^{12} \cdot 5^3 \cdot 11 \cdot 13^4}{3^{13}}$ & $A_4$ \\
[1.5ex] \hline & & &  \\ [-3ex]
$13$ & $\Q\left(\sqrt{13}\right)$ & $\displaystyle \frac{2^{18} \cdot 3^3 \cdot 13^4 \cdot 127^3 \cdot 139^3 \cdot 157^3 \cdot 283^3 \cdot 929}{5^{13} \cdot 61^{31}}$ & $A_4$  \\
[1.5ex] \hline & & &  \\ [-3ex]
$13$ & $\Q(\sqrt{13})$  & $\displaystyle \frac{2^{14} \cdot 5^2}{3^{13}}\left(2 \cdot 5 \cdot 251 \cdot 6373 \pm 13^2 \cdot 26251 \sqrt{13} \right)$ & $A_4$ \\
[1.5ex] \hline
\end{tabular}
\end{center}\bigskip

Unfortunately, we cannot confirm that there do not exist other counterexamples over $\Q$ in the case of $\ell = 13$. To do so, we would need to confirm that there are indeed only three noncuspidal rational points on $X_{S_4}(13)$. The genus of this curve is 3, so we know that it has only finitely many rational points, but its Jacobian has rank 3 (assuming the Birch and Swinnerton-Dyer conjecture), so the method of Chabauty-Coleman to bound its rational points does not necessarily apply. 

The following table lists the genera for the modular curves corresponding to the exceptional subgroups for $\ell \le 37$ which arise in Sections \ref{sec:Nsp} and \ref{sec:Nns}.\bigskip 

\begin{center}
\begin{tabular}{|c|ccccccccccc|}
\hline 
$H$ & $A_4$ & $A_4$ & $A_4$ & $S_4$ & $A_4$ & $S_4$ & $A_4$ &  $A_4$ & $A_5$ & $A_5$ & $S_4$  \\ \hline
$\ell$ & 5 & 7 & 11 & 11 & 13 & 13 & 17 & 19 & 29 & 31 & 37 \\ \hline
 $g(X_{H}(\ell))$ & 0 & 0 & 1 & 1 & 3 & 3  & 9 & 14 & 11 & 14 & 142
 \\ \hline
\end{tabular}
\end{center}
\bigskip

\begin{rmk} 
Genus formulas can be found in  \cite[Table 2.1]{cojocaru-hall2005} or \cite[II.2.1]{ligozat1976}. If $\ell \equiv \pm 3\mod{8}$, then $\PSL_2(\F_{\ell})$ does not contain a subgroup isomorphic to $S_4$, but it contains a subgroup isomorphic to $A_4$, and there is a lift of it in $\GL_2(\F_{\ell})$ whose image in $\PGL_2(\F_{\ell})$ is isomorphic to $S_4$. In this case we use the $A_4$ genus formula for $S_4$, as the associated modular curves are twists of each other.
\end{rmk}

The curve $X_{S_4}(11)$ has genus 1, and Ligozat showed that it is an elliptic curve with trivial Mordell-Weil group over $\Q$. This elliptic curve has Cremona label 121.a2. A rational point on this curve which is not cuspidal would give us a counterexample over $\Q$ to the local-global principle for the normalizer of a nonsplit Cartan. The one rational point is however a cusp, as Ligozat shows in \cite[II.4.4.1]{ligozat1976}, so there are no counterexamples over $\Q$ for $\ell = 11$. The Mordell-Weil group is also trivial if we base change to $K = \Q(\sqrt{-11})$, so there are no counterexamples over that field either. 





\bibliographystyle{alpha}
\bibliography{localglobal}

\end{document}